\documentclass[11pt]{amsart}
\usepackage{graphicx}
\usepackage{amsmath}
\usepackage{amsthm}
\usepackage{listings}
\usepackage{hyperref}
\usepackage{tikz}
\usepackage{float}
\usepackage{amssymb}
\usepackage{color}                    

\makeatletter
\newtheorem*{rep@theorem}{\rep@title}
\newcommand{\newreptheorem}[2]{%
\newenvironment{rep#1}[1]{%
 \def\rep@title{#2 \ref{##1}}%
 \begin{rep@theorem}}%
 {\end{rep@theorem}}}
\makeatother

\newtheorem{theorem}{Theorem}[section]
\theoremstyle{definition}
\newreptheorem{theorem}{Theorem}

\newtheorem{corollary}{Corollary}[theorem]
\newtheorem{lemma}[theorem]{Lemma}
\newtheorem*{remark}{Remark}

\def\sm{{\mathcal M}}
\def\N{{\mathbb N}}
\def\Q{{\mathbb Q}}
\def\R{{\mathbb R}}
\def\mult{\operatorname{mult}\,}

\title[Multiplicities and Quadratic Representations]{Multiplicities of eigenvalues and quadratic representations of integers}
\author{Siqi Fu and Andrew Pendleton}

\address{Department of Mathematical Sciences,
Rutgers University-Camden, Camden, NJ 08102}
\email{sfu@camden.rutgers.edu}
\email{ap3179@scarletmail.rutgers.edu, ajpendleton.math@gmail.com} 

\thanks{This project was partially supported by NSF grant DMS-2055538.}

\begin{document}

\maketitle
\begin{abstract}
    We study the set $\sm$ of all multiplicities of non-zero eigenvalues for the Laplace operator on a two-dimensional rectangle or torus. We show that for a rectangle with the side length ratio $r$, $\sm=\N$, the set of all positive integers, if and only if $r^2$ is rational. 
    For a torus whose generating vectors have a length ratio $r$ and the angle between them $\theta$, we show that $\sm$ is an infinite set if and only if both $r\cos\theta$ and $r^2$ are rational. In this case, $\sm=2\mathbb{N}, 4\mathbb{N}$, or $6\mathbb{N}$, and we obtain a characterization for each of these cases in terms of $r\cos\theta$ and $r^2$. In the case when at least one of $r\cos\theta$ or $r^2$ is irrational, we show that $\sm=\{2\}$ or $\{2, 4\}$, and obtain a characterization for these cases. We prove these results by studying the number of integral lattice points on dilated ellipses.

\bigskip
\noindent{{\sc Mathematics Subject Classification} (2020): 11E25, 35P15.}

\smallskip

\noindent{{\sc Keywords}: Laplace operator, eigenvalue, multiplicity, quadratic form, torus.}
\end{abstract}

\section{Introduction} \label{intro}

In this paper, we study two seemingly unrelated subjects: the multiplicities of eigenvalues of the Laplace operator and quadratic representations of integers. For a domain in the plane, the eigenvalues of the Laplace operator subject to the Dirichlet boundary condition represent the frequencies of the vibrations of the drumhead. The multiplicity of an eigenvalue represents the number of modes associated with the corresponding frequency. The interplay between the spectral properties of the Laplace operator and the geometry of the underlying space has been studied extensively since Mark Kac posed his famous problem ``Can One Hear the Shape of a Drum?" in 1966 (\cite{Kac1966}). Here, we are interested in the set $\sm$ of all multiplicities of (non-zero) eigenvalues and how its structure relates to the geometric and algebraic properties of the underlying space. 

The classical Courant's Nodal Domain Theorem states that the number of nodal domains of an eigenfunction associated with the $k^{\rm th}$-eigenvalue  is at most $k$.  As a consequence, the first eigenvalue on a (connected) domain is always simple. For smoothly bounded planar domains, it was shown by S.-Y. Cheng~\cite{Cheng} that the multiplicity of the second eigenvalue is less than or equal to $3$ and by Nadirashvili~\cite{Nad} that the multiplicity of the $k^{\text{th}}$ eigenvalue is less than or equal to $2k-1$ for $k\ge 3$ (see the recent preprint \cite{BerardHelffer} and references therein for an extensive discussion of relevant results). 

In general, it is difficult to explicitly compute eigenvalues or determine their multiplicities. For string vibrations, all eigenvalues are simple. On a circular planar drumhead, the first eigenvalue is simple while all other eigenvalues have multiplicities $2$. For the unit ball in $\R^n$, $n\ge 2$, the set of multiplicities of eigenvalues for the Dirichlet Laplacian is identical to the set of all dimensions of the spaces of spherical harmonics of degree $k$ ($k\ge 0$), which is given by $\sm(B_n)=\{1, n, C(k+n-1, n-1)-C(k+n-3, n-1) \mid k\ge 2\}$, where $C(\cdot, \cdot)$ denotes the combination number. The set of multiplicities for the Laplace operator on the unit sphere in $\R^n$ is the same as that of the unit ball (see \cite[Chapter IV]{SW} for relevant material on spherical harmonics).

For planar domains,  explicit formulas for the eigenvalues are known only for a few cases, such as circles, rectangles, equilateral triangles, hemi-equilateral triangles, and isosceles right triangles (see \cite{McCartin}).  In \cite{Pinsky}, Pinsky obtained a formula for multiplicities of eigenvalues on the equilateral triangles and show that the set of multiplicities of eigenvalues is $\mathbb{N}$. In \cite{Fu2025-ph},  the authors studied the set of multiplicities of eigenvalues for the Laplace operator on rectangles. They observed that on the rectangle $\Omega_{a, b}=[0,a]\times[0,b]$, $\sm=\{1\}$ if $(a/b)^2\notin\mathbb{Q}$, and showed that squares also satisfy property ($M$),  as a simple consequence of Legendre's formula on the number of ways an integer can be represented as a sum of squares of integers. (A domain is said to satisfy property ($M$) if the set of multiplicities of eigenvalues for the Dirichlet Laplacian is $\N$ \cite{Fu2025-ph}.)  In \cite{Heimrath}, Heimrath obtained formulas for multiplicities of eigenvalues on rectangles such that $(a/b)^2\in \{2, 3, 7\}$ through  generalizing Legendre's formula and as a consequence, showed that these rectangles satisfy property ($M$). The problem of whether or not $\Omega_{a, b}$ satisfying property ($M$) is characterized by $(a/b)^2\in\Q$ was left open in \cite{Fu2025-ph} and \cite{Heimrath}. The first result in this paper answers this question affirmatively:

\begin{theorem}\label{M}
    The rectangle $\Omega_{a, b}=[0,a]\times[0,b]$ satisfies property (M) if and only if $(a/b)^2$ is rational.
\end{theorem}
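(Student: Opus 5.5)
Dirichlet eigenvalues on $\Omega_{a,b}$ are $\pi^2(m^2/a^2+n^2/b^2)$ with $m,n\ge 1$. The multiplicity of $\lambda$ is therefore the number of pairs $(m,n)\in\N^2$ with $m^2/a^2+n^2/b^2=\lambda/\pi^2$.

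\textbf{Necessity.} If $(a/b)^2\notin\Q$, every multiplicity is $1$, as observed in \cite{Fu2025-ph}. Indeed, suppose $m^2b^2+n^2a^2=m'^2b^2+n'^2a^2$ with $(m,n)\ne(m',n')$. Then $(m^2-m'^2)b^2=(n'^2-n^2)a^2$, and both coefficients are nonzero, so $(a/b)^2\in\Q$. Hence $\sm=\{1\}\ne\N$.

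\textbf{Sufficiency.} Write $(a/b)^2=p/q$ in lowest terms and scale. The multiplicity of an eigenvalue equals the number of positive solutions of $Q(m,n)=qm^2+pn^2=N$ for the appropriate integer $N$. The task is to show that, for every $k\in\N$, some $N$ has exactly $k$ representations by $Q$ with $m,n\ge1$. I would proceed in three steps.

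First, pick a single "seed" value $N_0=Q(m_0,n_0)$ with $m_0,n_0\ge1$. Then use the arithmetic of the binary form $Q$, of discriminant $D=-4pq$, to multiply up.

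Second, choose primes $\ell$ that split in $\Q(\sqrt{-pq})$ and lie in the principal genus, or better, are represented by the principal form $x^2+pqy^2$. For such primes I would use the standard count
\[
r_Q(N)\ \text{summed over the genus}\;\propto\;\sum_{d\mid N}\chi_D(d).
\]
This is delicate because $Q$ need not be alone in its genus. To avoid class group issues, I would use composition directly. If $\ell=u^2+pqv^2$, then
\[
(qm^2+pn^2)(u^2+pqv^2)=q(mu\pm pnv)^2+p(nu\mp qmv)^2 .
\]
So multiplying by $\ell$ produces new representations from old ones, and multiplying by $\ell^{j}$ produces exactly $j+1$ "orbits". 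The cleanest route is to count representations of $N_0\ell^{k-1}$ exactly. The representations $(x,y)$ of $N_0\ell^{j}$ by $Q$ correspond to ideals of norm $N_0\ell^{j}$ in a fixed ideal class, up to units. There are $2$ units (or $4$/$6$ only when $pq\le 3$, handled separately). If $N_0$ is chosen so that only one ideal of norm $N_0$ lies in the class of $Q$, and $\ell$ is a split prime whose prime ideal $\mathfrak l$ is principal, then the ideals of norm $N_0\ell^{k-1}$ in that class are $\mathfrak a\,\mathfrak l^{i}\bar{\mathfrak l}^{k-1-i}$ for $0\le i\le k-1$. This gives exactly $k$ ideals, hence a controlled number of integer solutions. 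Dirichlet's theorem (Chebotarev for the ring class field) supplies infinitely many such $\ell$.

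Third, convert to positive solutions. Each ideal gives the solutions $(\pm x,\pm y)$. Provided none of them has $x=0$ or $y=0$, each ideal contributes exactly one solution with $m,n\ge1$. A solution with a zero coordinate would force $N_0\ell^{k-1}=qx^2$ or $py^2$. Choosing $\ell\nmid pq$ and $N_0$ with an odd power of some prime outside $pq$ (or simply avoiding these finitely many shapes) rules this out. The resulting multiplicity is $k$, up to a fixed constant factor coming from conjugate ideals when $Q$ is ambiguous. If $\mathfrak a$ and $\bar{\mathfrak a}$ give the same class, the count becomes $k$ or $2k$ according to the symmetry $(x,y)\mapsto(x,-y)$, which is already absorbed in the $\pm$ signs.

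\textbf{Main obstacle.} The hardest step is getting exactly $k$ rather than $2k$ or $k$ plus extra contributions. This means controlling nonprincipal ideal classes and the coincidence between an ideal and its conjugate. Here I would first try an elementary fallback that avoids class groups. Take the seed $N_0=p+q$, i.e.\ $(m_0,n_0)=(1,1)$. Take $\ell$ a prime with $\ell=u^2+pqv^2$ and $\ell\equiv1$ modulo all relevant moduli. Then bound the representations of $(p+q)\ell^{k-1}$ from above by the divisor-type count $\sum_{d\mid N}\chi_D(d)$ restricted to the genus, and from below by the $k$ explicit composed solutions. Choosing $\ell$ so that $(p+q)$ has few representations makes the two bounds match.
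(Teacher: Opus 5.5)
\textbf{There is a genuine gap in the sufficiency argument.} Your necessity argument is correct and is the paper's Lemma~\ref{lm:1}.

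The hypothesis you place on the seed, that only one ideal of norm $N_0$ lies in the class $C$ of $Q=qx^2+py^2$, cannot hold when $m_0n_0\neq 0$. The form $Q$ is ambiguous, so $C=C^{-1}$. Hence $\bar{\mathfrak a}$, which corresponds to $(m_0,-n_0)\neq\pm(m_0,n_0)$, is a second ideal of norm $N_0$ in $C$. Relatedly, an ideal gives only $\pm(x,y)$, not $(\pm x,\pm y)$; for discriminant $-4pq$ with $pq>1$ the units are just $\pm1$.

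What you actually need is an $N_0$ with exactly one representation in $\N^2$ and none on the axes, i.e.\ exactly the two ideals $\mathfrak a\neq\bar{\mathfrak a}$ of norm $N_0$ in $C$. Then $N_0\ell^{k-1}$ has exactly the $2k$ ideals $\mathfrak a\,\mathfrak l^i\bar{\mathfrak l}^{k-1-i}$ and $\bar{\mathfrak a}\,\mathfrak l^i\bar{\mathfrak l}^{k-1-i}$ in $C$, hence $4k$ lattice points and exactly $k$ in $\N^2$. You never produce such an $N_0$, and you leave ``$k$ or $2k$'' undecided, which is the whole content of the theorem.

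The fallback $N_0=p+q$ does not fix this. The representations of $p+q$ are fixed, and no choice of $\ell$ changes them, so the count can simply be wrong. Take $(a/b)^2=24$, $Q=x^2+24y^2$, $N_0=25=1+24\cdot 1^2=5^2$, and $\ell=73=7^2+24\cdot 1^2$. Then
\[
1825=35^2+24\cdot 5^2=31^2+24\cdot 6^2=17^2+24\cdot 8^2,
\]
so $k=2$ produces multiplicity $3$. The reason is that the class group of discriminant $-96$ is $(\mathbb{Z}/2\mathbb{Z})^2$, and $\mathfrak p_5^2$, $\bar{\mathfrak p}_5^2$, $(5)$ are all principal.

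\textbf{The missing idea is to make the seed a prime.} By Weber's theorem, $Q$ represents some prime $p_0\nmid 2pq$. Only $\mathfrak p_0$ and $\bar{\mathfrak p}_0$ have norm $p_0$, so $p_0$ is represented uniquely up to signs, with both coordinates nonzero (compare the paper's Lemma~\ref{prime uniqueness}).

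With this seed, your scheme of multiplying by powers of a principal split prime goes through. It is essentially the paper's argument for Theorem~\ref{Q2} in the non-ambiguous case. The difference is that here $\bar{\mathfrak p}_0$ also lies in $C$, and that is exactly what turns $2k$ ideals into $k$ points of $\N^2$.

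The paper's proof of Theorem~\ref{Q} is shorter still. Since $C^2=1$, all $2k$ ideals $\mathfrak p_0^{j}\bar{\mathfrak p}_0^{2k-1-j}$ of norm $p_0^{2k-1}$ already lie in $C$. Conjugation pairs $j$ with $2k-1-j$, and the odd exponent keeps every solution off the axes. So $p_0^{2k-1}$ has exactly $k$ representations in $\N^2$, and no auxiliary prime $\ell$ is needed.

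The square case $pq=1$, with four units, still has to be treated separately; you say it is ``handled separately'' but do not do it.
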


Our proof differs from \cite{Heimrath} in that we do not provide formulas for the number of lattice points on general ellipses. Instead, we prove the existence of some eigenvalue with arbitrary multiplicity, which is a direct consequence of the following theorem:

\begin{theorem}\label{Q}
    For any $m,n\in\mathbb{N}$ coprime such that $mn>3$, there exists some prime $p$ such that \[p^{2k-1}=mx^2+ny^2\] has exactly $k$ distinct solutions $(x,y)\in\mathbb{N}^2$.
\end{theorem}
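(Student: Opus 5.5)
The plan is to count representations of $N=p^{2k-1}$ through ideals in the imaginary quadratic order $\mathcal{O}$ of discriminant $D=-4mn$. The key is to choose $p$ so that every ideal of norm $N$ falls into the ideal class attached to $f(x,y)=mx^2+ny^2$.

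Since $\gcd(m,n)=1$, $f$ is a primitive positive definite binary quadratic form of discriminant $D$. By the classical theorem of Dirichlet--Weber, every primitive positive definite form represents infinitely many primes. So I first fix a prime $p\nmid 2mn$ with $p=ma^2+nb^2$ for some integers $a,b$. Then $p$ splits in $\mathcal{O}$ as $p\mathcal{O}=\mathfrak{P}\overline{\mathfrak{P}}$, and because $p$ is prime to the conductor, $\mathfrak{P}$ and $\overline{\mathfrak{P}}$ are invertible ideals. Under the form--ideal correspondence, $\mathfrak{P}$ lies in the class $c$ of $f$ (or of its inverse). The crucial observation is that $f=mx^2+ny^2$ has no middle term, so it is ambiguous: $c=c^{-1}$, i.e.\ $c^2=1$.

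The ideals of norm $p^{2k-1}$ prime to the conductor are exactly $\mathfrak{P}^i\overline{\mathfrak{P}}^{\,2k-1-i}$ for $0\le i\le 2k-1$. There are $2k$ of them, and they are pairwise distinct because $\mathfrak{P}\neq\overline{\mathfrak{P}}$. The class of such an ideal is $c^{i}c^{-(2k-1-i)}=c^{2i-2k+1}=c$, since the exponent is odd and $c^2=1$. So all $2k$ ideals lie in the single class $c=c^{-1}$.

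I then apply the standard count: for $N$ prime to the conductor,
\[
r_f(N)=\#\{(x,y)\in\mathbb{Z}^2: f(x,y)=N\}=w\cdot\#\{\mathfrak{a}\in c:\ N(\mathfrak{a})=N\},
\]
where $w=|\mathcal{O}^\times|$. This count includes imprimitive representations, which are handled automatically by allowing non-primitive ideals. Since $D=-4mn$ with $mn>1$, we have $D\ne -3,-4$, so $w=2$ and $r_f(p^{2k-1})=4k$.

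None of these representations has $x=0$ or $y=0$. For example, $ny^2=p^{2k-1}$ is impossible: $p\nmid n$ would force $p^{2k-1}$ to be a perfect square, which fails for an odd exponent. The case $x=0$ is the same. The group $\{(\pm x,\pm y)\}$ therefore acts freely on the solution set, and the number of solutions in $\mathbb{N}^2$ is $4k/4=k$, as claimed.

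The step I expect to need the most care is the ideal-theoretic bookkeeping when $-4mn$ is not a fundamental discriminant. One has to justify that the correspondence between proper equivalence classes of forms and classes of invertible $\mathcal{O}$-ideals, together with the formula for $r_f(N)$, holds for $N$ coprime to the conductor; choosing $p\nmid 2mn$ guarantees this. I would also double-check where the hypothesis $mn>3$ actually enters. The argument above seems to use only $mn>1$, to get $w=2$, so I would look for possible degenerate small cases ($mn=2,3$) in the intended reading of "distinct solutions" and handle them explicitly if necessary.

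A fallback that avoids ideals is to use the classical formula $\sum_{[g]} r_g(N)=w\sum_{d\mid N}\left(\tfrac{D}{d}\right)=2\cdot 2k$ over all classes. One then shows by composition that all representations of $p^{2k-1}$ lie in the class of $f$, which again uses that $f$ represents $p$ and is ambiguous.
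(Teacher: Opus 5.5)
Your proposal is correct and follows essentially the paper's argument. You pick a prime $p$ represented by $mx^2+ny^2$ via Weber, use the ambiguity of the form ($c^2=1$) to force every ideal $\mathfrak{P}^i\overline{\mathfrak{P}}^{\,2k-1-i}$ of norm $p^{2k-1}$ into the class of the form, and then count. Your execution is in fact cleaner than the paper's:
\begin{itemize}
\item you work in the order of discriminant $-4mn$ rather than in $\mathcal{O}_K$;
\item you replace Lemmas~\ref{prime uniqueness} and~\ref{pair ideal correspondence} by the standard $w$-to-one ideal--representation count;
\item you check $xy\neq 0$ before dividing by $4$.
\end{itemize}
You are also right that only $mn>1$, which gives $w=2$, is actually needed.
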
 

We now turn our attention to the multiplicities of eigenvalues for the Laplace operator on 2-dimensional tori. Let $L$ be the lattice in $\R^2$ generated by linearly independent vectors $\vec{a}$ and $\vec{b}$. Let $T=\R^2/L$.  Let $r=|\vec{a}|/|\vec{b}|$ and $\theta\in (0, \ \pi/2]$ be the angle between $\vec{a}$ and $\vec{b}$. Our main result regarding the set $\sm(T)$ of multiplicities of eigenvalues for the Laplace operator on $T$ is:

\begin{theorem}\label{N} The multiplicity set $\mathcal{M}(T)$ of the Laplace operator on torus $T$ is infinite if and only if $r^2\in\mathbb{Q}$ and $r\cos{\theta}\in \mathbb{Q}$. In this case,  
\[
\mathcal{M}(T)=
\begin{cases}
    6\mathbb{N}&\text{if and only if}\; \Delta=-3;\\
    4\mathbb{N}&\text{if and only if}\; \Delta=-4;\\
    2\mathbb{N}&\text{if and only if}\; \Delta\neq-3,-4;\\
\end{cases}
\]
where $\Delta$ is the discriminant defined by
\[
\Delta = -4\left(\dfrac{r\cdot\beta\delta}{\gcd(\beta,\delta)}\right)^2\sin^2\theta
\]
with
\[
\beta:=\min\{n\in\mathbb{N}:2nr\cos\theta\in\mathbb{Z}_{\geq 0}\} \quad\text{and}\quad \delta:=\min\{n\in\mathbb{N}:nr^2\in\mathbb{Z}_{\geq 0}\}.
\]
The multiplicity set $\mathcal{M}(T)$ is finite if and only if  either $r^2\not\in\mathbb{Q}$ or $r\cos\theta\not\in\mathbb{Q}$. In this case,  we have 
$\mathcal{M}(T)=\{2\}$ if and only if one of the followings holds: 
\begin{itemize}
\item[$(1)$] $r\cos\theta\not\in\mathbb{Q}$, $r^2\in\mathbb{Q}$ but $r\not\in\mathbb{Q}$;
\item[$(2)$] $r\cos\theta\not\in\mathbb{Q}$, $r^2\not\in\mathbb{Q}$, and $\{1, r\cos\theta, r^2\}$ is linearly independent over $\mathbb{Q}$;
\item[$(3)$] $r\cos\theta\not\in\mathbb{Q}$, $r^2\not\in\mathbb{Q}$, and $r\cos\theta=\alpha r^2+\beta$ for some $\alpha, \beta\in\mathbb{Q}$ such that $\alpha^2+\beta$ is not a square of a rational number.
\end{itemize}
Moreover, $\mathcal{M}(T)=\{2, 4\}$ if and only if one of the followings holds:
\begin{itemize}
\item[$(1)$] $r\cos\theta\in\mathbb{Q}$ and $r^2\not\in\mathbb{Q}$;
\item[$(2)$] $r\cos\theta\not\in\mathbb{Q}$ and $r\in\mathbb{Q}$;
\item[$(3)$] $r\cos\theta\not\in\mathbb{Q}$, $r^2\not\in\mathbb{Q}$, and $r\cos\theta=\alpha r^2+\beta$ for some $\alpha, \beta\in\mathbb{Q}$ such that $\alpha^2+\beta$ is a square of a rational number.
\end{itemize}
\end{theorem}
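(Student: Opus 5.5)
The eigenvalues of the Laplacian on $T=\R^2/L$ are $4\pi^2|\xi|^2$ for $\xi$ in the dual lattice $L^*$. The multiplicity of a value $\lambda>0$ is therefore the number of dual lattice points on a circle. Writing $\xi=x\vec a^*+y\vec b^*$ with $(x,y)\in\mathbb{Z}^2$, the quantity $|\xi|^2$ is, up to a positive constant, a binary quadratic form. Inverting the Gram matrix of $\vec a,\vec b$ gives, up to scaling,
\[
Q(x,y)=x^2-2r\cos\theta\,xy+r^2y^2 .
\]
So $\sm(T)$ is the set of values $\#\{(x,y)\in\mathbb{Z}^2: Q(x,y)=c\}$ for $c\in Q(\mathbb{Z}^2)\setminus\{0\}$. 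Since $(x,y)\mapsto(-x,-y)$ acts without fixed points on nonzero vectors, every multiplicity is even and $2\in\sm$ always, for instance at $c=Q(1,0)$ after discarding finitely many coincidences. The whole proof is a study of representation numbers of $Q$, split according to the arithmetic of the coefficients $1,\ 2r\cos\theta,\ r^2$.

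\emph{Rational case.} If $r\cos\theta,r^2\in\Q$, multiply $Q$ by $\beta\delta/\gcd(\beta,\delta)$ (or by the least common denominator) to get a primitive positive definite integral form $f=ax^2+bxy+cy^2$. Its discriminant is the $\Delta$ of the statement, since $b^2-4ac=-4(\text{scale})^2r^2\sin^2\theta$. The number of representations of $n$ by $f$ is governed by the genus and class group theory. For the positive integers $n$ that are products of primes $p$ splitting in $\Q(\sqrt\Delta)$ and representable by the principal genus, one has
\[
r_f(n)=w\prod (e_i+1), \qquad w=|\mathrm{Aut}(f)|\in\{2,4,6\},
\]
where $w=6$ iff $\Delta=-3$ and $w=4$ iff $\Delta=-4$. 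To get $w\N$, take a split prime $p$ properly represented by $f$ (such primes exist by Chebotarev applied to the ring class field). Then $p^{k-1}$ is represented exactly $wk$ times when $p$ lies in an ideal class that makes all ideals of norm $p^{k-1}$ land in the class of $f$. The cleanest choice is a prime $p$ whose prime ideals are principal, i.e.\ $p$ is represented by the principal form. This mirrors Theorem~\ref{Q}. Transporting to $f$ by multiplying by a fixed value of $f$ handles nonprincipal $f$. Conversely, every multiplicity is divisible by $w$, because $\mathrm{Aut}(f)$ acts freely on nonzero solutions; this gives the three cases. Finiteness in the irrational case, shown below, gives the converse of the ``infinite'' equivalence.

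\emph{Irrational case.} Here $Q(x,y)=Q(x',y')$ is a linear relation over $\Q$ among $1,\ r\cos\theta,\ r^2$ with coefficients
\[
x^2-x'^2,\qquad -2(xy-x'y'),\qquad y^2-y'^2 .
\]
Sort the cases by the $\Q$-span of these three numbers.
\begin{itemize}
\item[(a)] If $\{1,r\cos\theta,r^2\}$ is independent, each coefficient vanishes, so $(x',y')=\pm(x,y)$ and $\sm=\{2\}$.
\item[(b)] If $r\cos\theta\notin\Q$ but $r^2\in\Q$, then $xy=x'y'$ and $x^2+r^2y^2=x'^2+r^2y'^2$. Solving, the only nontrivial option is $x'=\pm ry$, $y'=\pm x/r$. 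This is possible iff $r\in\Q$ and then yields at most $4$ solutions, attained for suitable $(x,y)$. So $\sm=\{2,4\}$ iff $r\in\Q$, else $\{2\}$.
\item[(c)] If $r\cos\theta\in\Q$ and $r^2\notin\Q$, then $y'=\pm y$ and $x^2-2r\cos\theta\,xy=x'^2-2r\cos\theta\,x'y$. This gives $x'=x$ or $x'=2r\cos\theta\, y-x$, an integer for suitable $y$. Hence the count is at most $4$ and equals $4$ for infinitely many $c$, so $\sm=\{2,4\}$.
\item[(d)] If $r\cos\theta=\alpha r^2+\beta$ with $r^2\notin\Q$, the relation splits into two rational equations, $x^2-2\beta xy=x'^2-2\beta x'y'$ and $y^2-2\alpha xy=y'^2-2\alpha x'y'$. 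So we intersect two conics, i.e.\ find common integer representations of two binary forms. Generically only $\pm(x,y)$ survive. A nontrivial linear involution preserving both forms exists iff the pencil $x^2-2\beta xy+t(y^2-2\alpha xy)$ contains a degenerate member with rational square factor. Its discriminant condition reduces to $\alpha^2+\beta$ being a rational square. By B\'ezout, or since two conics meet in at most 4 points, the count is at most $4$.
\end{itemize}

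I expect the main obstacle to be case (d): verifying precisely that $\alpha^2+\beta\in\Q^2$ is equivalent to the existence of infinitely many, or at least one, integer pairs with four solutions, including the integrality of the involution. The rational case also needs care to show that \emph{every} multiple $wk$ occurs, via choice of primes in the correct class.
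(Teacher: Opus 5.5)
Your cases (a)--(c) are the paper's Lemmas~\ref{lm:c}, \ref{lm:b} and \ref{lm:a}, and handling (d) through the pencil of the two rational forms is a legitimate alternative to the direct elimination in Lemma~\ref{lm:d}. But the decisive sentence in (d), ``its discriminant condition reduces to $\alpha^2+\beta$ being a rational square,'' is asserted, not computed, and it is false.

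The pencil member $x^2-2(\beta+\alpha t)xy+ty^2$ is degenerate iff $(\beta+\alpha t)^2=t$, i.e.\ $\alpha^2t^2+(2\alpha\beta-1)t+\beta^2=0$. This quadratic has discriminant $1-4\alpha\beta$, which is positive by definiteness. So your method actually gives: $4\in\sm(T)$ iff $1-4\alpha\beta\in\Q^2$. For the ``only if'' you also need a short argument. When $1-4\alpha\beta\notin\Q^2$, the two nontrivial common isometries $\sigma_1$ and $\sigma_2=-\sigma_1$ are interchanged by Galois conjugation. Hence $\sigma_1(P)\in\Q^2$ forces $\sigma_1(P)=\sigma_2(P)$, i.e.\ $P=0$.

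This criterion contradicts item (3) as printed. Take $r^2=3+\sqrt2$ and $r\cos\theta=\sqrt2/2$. Then $\alpha=\tfrac12$, $\beta=-\tfrac32$, and $\alpha^2+\beta=-\tfrac54$ is not a rational square, so the statement predicts $\sm(T)=\{2\}$. Yet $\tilde f(\pm(1,1))=\tilde f(\pm(2,0))=4$, so by Lemma~\ref{lm:2} we get $4\in\sm(T)$.

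The paper's Lemma~\ref{lm:d} writes the relation as $r^2=\alpha'(-2r\cos\theta)+\beta'$. In that parametrization the correct invariant is $\alpha'^2+\beta'=\alpha'^2(1-4\alpha\beta)$, so the theorem statement has mistranscribed the lemma. You therefore cannot prove (3) as worded. You should prove the corrected criterion instead, and your pencil argument does that once the computation is done honestly.

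In the rational case, the principal-form step is fine. However, ``multiplying by a fixed value of $f$'' yields $w\N$ only if that value $m$ satisfies $R_f(m)=w$, because $R_f(mp^{k-1})=kR_f(m)$ for $p\nmid m\Delta$ with principal prime factors. A prime value works for non-ambiguous $f$. For an ambiguous non-principal $f$, though, both $\mathfrak l$ and $\bar{\mathfrak l}$ lie in its class, so $R_f(\ell)=2w$ and you only reach $2w\N$. For example, with $2x^2+3y^2$ one gets $R(5\cdot 7^{k-1})=4k$. You need $m$ to be the norm of a conjugation-invariant ideal in the class: here $m=2$ gives $R(2\cdot 7^{k-1})=2k$. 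This is precisely what Lemma~\ref{Ambiguous_Ideal} supplies in the paper's argument.

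There are also three smaller slips:
\begin{itemize}
\item $w$ must be $|\mathrm{Aut}^+(f)|$, not $|\mathrm{Aut}(f)|$. The full group contains reflections that fix vectors, so it does not act freely, and it has order $2w$ for ambiguous forms.
\item The formula $w\prod(e_i+1)$ is the representation count summed over all classes, not the count for a single form in the principal genus.
\item ``$2\in\sm$ always'' is false when $\Delta=-3$ or $\Delta=-4$.
\end{itemize}
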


As in the proof of Theorem~\ref{M}, the main ingredient in the proof of Theorem~\ref{N} when both $r^2$ and $r\cos\theta$ are rational numbers is  
quadratic representations of integers. (see Section~\ref{s:prelim} below for relevant material.) On that account, our main result is:

\begin{theorem}\label{Q2}
    Let $Q(x, y)=ax^2+bxy+cy^2$ be a primitive positive-definite binary quadratic form. Then the proper representation counting function \[r_Q^+:\mathbb{Z}_{\geq 0}\twoheadrightarrow \mathbb{Z}_{\geq 0}\] is surjective.
\end{theorem}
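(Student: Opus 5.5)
The plan is to reduce the count of proper representations to a count of square roots of the discriminant, and then choose $n$ by placing its prime factors in prescribed ideal classes. Let $D=b^2-4ac<0$ be the discriminant of $Q$, let $C(D)$ be the form class group, and let $w=|\mathrm{Aut}(Q)|\in\{2,4,6\}$.

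The starting point is the classical correspondence recalled in Section~\ref{s:prelim}. Proper representations of $n$ by the forms of discriminant $D$, taken modulo automorphisms, are in bijection with residues $t \bmod 2n$ satisfying $t^2\equiv D \pmod{4n}$. The residue $t$ corresponds to the form $(n,t,(t^2-D)/4n)$, and its class records which form represents $n$. So $r_Q^+(n)$ is $w$ times the number of such roots $t$ whose form lies in $[Q]$, up to whatever normalization the paper's definition of $r_Q^+$ uses. If that definition differs, for instance by restricting to $x,y>0$ as in Theorem~\ref{Q}, I would first translate it into this root count by the appropriate constant or sign bookkeeping. The value $0$ is immediate: take $n$ a prime that is inert in $\Q(\sqrt D)$, which exists by Dirichlet's theorem.

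Next I would compute the roots multiplicatively. Take $n=p_1p_2\cdots p_k$ with distinct primes $p_i\nmid 2D$ that split. By CRT there are $2^k$ roots $t$, indexed by sign choices $\epsilon\in\{\pm1\}^k$. The class of the form attached to $\epsilon$ is $\prod_i C_i^{\epsilon_i}$, where $C_i$ is the class of a prime form above $p_i$; this uses composition of forms in $C(D)$. By the Chebotarev (Weber) density theorem, every class contains infinitely many primes, so the classes $C_i$ may be prescribed freely. Ramified primes and prime powers give further building blocks: a prime $p\mid D$ contributes a single root, and $p^e$ with $p\nmid 2D$ contributes two roots, with classes $C^{\pm e}$. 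The problem then becomes purely group-theoretic. For each target $m\ge1$, I must choose classes $C_1,\dots,C_k$ and exponents such that exactly the required number of sign patterns $\epsilon$ give $\prod_i C_i^{\epsilon_i}=[Q]$.

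For the construction I would use primes with trivial class to multiply the count by $2$: they never change which class is hit. The unit root is $n=q$, a single prime with $q\in[Q]$, which is a proper representation in the sense of the definition. Odd factors would come from a prime power $p^{e}$ whose class $C$ has order $>2$, possibly combined with a second prime: then only some of the sign combinations land in $[Q]$, and counting those gives the non-power-of-two values. I would try to realize every $m$ through its binary expansion, by combining blocks of this kind with disjoint prime supports.

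The main obstacle is this last step when $C(D)$ has exponent $2$. There $C=C^{-1}$ for every class, so all sign patterns land in the same class and the count is forced to be a power of $2$ times $w$. This is exactly where the precise definition of $r_Q^+$ matters. I would expect the normalization (positivity of $x,y$, or counting up to $\pm$ only) to break the $\epsilon\mapsto-\epsilon$ symmetry, and I would prove surjectivity there by a separate parity argument. The model is Theorem~\ref{Q}, where $p^{2k-1}$ produced exactly $k$ solutions: one can take $n=p^{2k-1}$ or $n=p^{j}q$ and check directly which of the divisor-chain representations are proper.
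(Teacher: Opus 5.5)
There is a genuine gap, and it is in your first step: you are counting the wrong thing. The correspondence with residues $t \bmod 2n$, $t^2\equiv D\pmod{4n}$, counts \emph{primitive} representations ($\gcd(x,y)=1$), equivalently primitive ideals of norm $n$. The paper's $r_Q^+(n)$ counts \emph{all} solutions of $Q(x,y)=n$ modulo $\mathrm{Aut}^+(Q)$; ``proper'' refers to the automorphisms, not to primitivity. By the ideal--representation correspondence (Lemma 2.1 of Blomer--Granville), $r_Q^+(n)$ is the number of all integral ideals of norm $n$ in the class of $Q$, imprimitive ones included. For $x^2+y^2$, for example, $r_Q^+(25)=3$, coming from $(3,4)$, $(4,3)$ and the imprimitive $(5,0)$.

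For the primitive count the statement is false, so the exponent-$2$ obstruction you identified is real. For $x^2+y^2$ that count is always $0$ or $2^k$. Restricting to $x,y>0$ or to sign classes only divides by a constant, so no normalization breaks the $\epsilon\mapsto-\epsilon$ symmetry. The failure is not confined to exponent $2$ either. Over disjoint prime supports the counts combine by convolution over $C(D)$, not additively, so there is no binary-expansion assembly. For $D=-23$ ($C(D)\cong\mathbb{Z}/3$) and $Q=2x^2+xy+3y^2$, the primitive count is always $0$ or $2^jJ_k$ with $J_k=(2^k-(-1)^k)/3\in\{1,3,5,11,21,\dots\}$, so $7$ never occurs.

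The missing idea is precisely the imprimitive representations. Let $q\nmid D$ be a prime represented by the principal form. The ideals of norm $q^\alpha$ are $\mathfrak{q}^\beta\bar{\mathfrak{q}}^{\alpha-\beta}$, $0\le\beta\le\alpha$; all are principal, but only $\beta\in\{0,\alpha\}$ are primitive. The paper multiplies these by one fixed ideal of $C_Q$:
\begin{itemize}
\item When $Q$ is non-ambiguous, it uses a prime $\mathfrak{p}\in C_Q$ over a split $p$. Since $[\bar{\mathfrak{p}}]\neq C_Q$, exactly the $\alpha+1$ ideals $\mathfrak{p}\mathfrak{q}^\beta\bar{\mathfrak{q}}^{\alpha-\beta}$ of norm $pq^\alpha$ lie in $C_Q$.
\item When $Q$ is ambiguous, it uses a conjugation-invariant ideal $\mathfrak{a}\in C_Q$ from Lemma~\ref{Ambiguous_Ideal}, and takes $n=N(\mathfrak{a})q^\alpha$.
\end{itemize}
In both cases $r_Q^+(n)=\alpha+1$ with $\alpha$ arbitrary.

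Your closing appeal to the ``divisor chain'' of $p^{2k-1}$ points the right way, but filtering it for properness discards exactly the ideals that carry the count. Even if you count all of them, a non-principal ambiguous class contains all $2k$ ideals $\mathfrak{p}^j\bar{\mathfrak{p}}^{2k-1-j}$, so $r_Q^+(p^{2k-1})=2k$ is always even. Theorem~\ref{Q} gets $k$ only because it counts points of $\mathbb{N}^2$, i.e.\ modulo all four sign changes rather than modulo $\pm I$.
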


This paper is organized as follows. Section~\ref{s:prelim} contains preliminaries on eigenvalues for rectangles and tori, as well as rudiments on quadratic representations of integers. We prove Theorem~\ref{Q} and Theorem~\ref{M} in Section~\ref{s:rectangles}, and Theorem~\ref{Q2} and Theorem~\ref{N} in Section~\ref{s:tori}.  We have made an effort to provide a presentation that is accessible to readers with limited backgrounds in differential equations or number theory.

\section{Preliminaries}\label{s:prelim}
\subsection{The Laplace operator on Rectangles and Tori}\label{s:prelim:delta}

Consider an elastic and homogeneous drumhead with density $\rho$ and tension $T$ stretched over a
frame represented as a domain $\Omega\subset\R^2$ in the
$xy$-plane.  Let $u(x, y, t)$ be the vertical displacement of the
membrane from the equilibrium position, and assume that the
horizontal displacement is negligible. It follows from Newton's second law of motion that $u=u(x, y, t)$ satisfies the wave
equation:
\begin{equation}\label{eq:wave}
u_{tt}=c^2 \Delta u  \  \text{on}\ \Omega,  \quad \ u=0 \ \text{on}\ \partial\Omega, 
\end{equation}
where $c=\sqrt{T/\rho}$. Solving this equation by separating the spacial and time  variables, we write $u(x, y,
t)=T(t)V(x, y)$. Then   
\[
\frac{T''}{c^2 T}=\frac{\Delta V}{V}=-\lambda,
\]
where $\lambda$ is a constant. The boundary value problem \eqref{eq:wave} is then reduced to the following Helmholtz equation with the Dirichlet boundary condition:
\begin{equation}\label{eq:lap}
\Delta V=-\lambda V \ \text{on}\ \Omega,  \quad V=0 \ \text{on}\ \partial\Omega.
\end{equation}
By Rellich's compactness lemma, the spectrum of the Dirichlet Laplacian consists of isolated eigenvalues $\lambda$ of finite multiplicity (see \cite[Theorem 6.3.1]{Davies95}). Let
\[
0<\lambda_1 \le \lambda_2\le \ldots \le
\lambda_k\le \ldots
\]
be the eigenvalues arranged in increasing order and repeated
according to their multiplicities. Let
$\varphi_k(x, y)$ be the eigenfunction associated with the eigenvalue $\lambda_k$.
Then the solution to the wave equation \eqref{eq:wave} has the form of
$$
u(x, y, t)=\sum_{k=1}^\infty \left(A_k\cos(c\sqrt{\lambda_k}t)+B_k\sin(c\sqrt{\lambda_k}t)\right)\varphi_k(x, y).
$$
The frequencies of the vibrations
$$
F_k=\frac{c\sqrt{\lambda_k}}{2\pi}, \quad k=1, 2, \ldots, 
$$
are constant multiples of the square root of the eigenvalues. 

The multiplicity of an eigenvalue $\lambda$ is the dimension of the associated eigenspace. Physically, the multiplicity represents the number of modes associated with the same frequency.  

In general, it is impossible to explicitly compute the eigenvalues for the Laplace operator, except for some special cases,
such as rectangles and tori.  The eigenvalues and associated eigenfunctions for the Dirichlet Laplacian on
a rectangle	$\Omega_{a, b} = \{(x,y) \ \mid \ \text{ 0}\leq x \leq a\text{, 0}\leq y \leq b\}$ can be easily computed by separation of variables, and they are given respectively by		
\[
		\begin{aligned}	
			\lambda_{m,n} 
			&=\pi ^2\left(\frac{m^2}{a^2}+
			\frac{n^2}{b^2}\right), \\
			V_{m,n}\text(x,y) &= \sin \frac{m\pi x}{a}\sin \frac{n\pi y}{b}, 
		\end{aligned} 
		\]
for $m,n \in \N$, where $\N$ is the set of all positive integers (see, for example, \cite[Lemma~6.2.1]{Davies95}). 
For any $\lambda >0$, the multiplicity $\mult(\lambda)$ of  $\lambda$ is the number of integer lattice points in the first quadrant on the following ellipse: 
		\begin{equation}\label{eq:mult-r}
		\frac{x^2}{(a\sqrt{\lambda}/\pi)^2} +\frac{y^2}{(b\sqrt{\lambda}/\pi)^2}=1.
		\end{equation}
We have the following simple observation (see~\cite{Fu2025-ph}):

\begin{lemma}\label{lm:1} 
If $(a/b)^2$ is irrational, then $\mult(\lambda_{m, n})=1$ for any  $m, n\in\N$. 
\end{lemma}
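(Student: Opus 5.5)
The plan is to show that any two pairs in $\N^2$ giving the same eigenvalue must coincide. Suppose $\lambda_{m,n}=\lambda_{m',n'}$ for some $(m',n')\in\N^2$. By the eigenvalue formula above, after dividing by $\pi^2$ and multiplying by $a^2$,
\[
m^2+\left(\frac{a}{b}\right)^2 n^2 = m'^2+\left(\frac{a}{b}\right)^2 n'^2,
\qquad\text{so}\qquad
m^2-m'^2=\left(\frac{a}{b}\right)^2\left(n'^2-n^2\right).
\]
We then split into two cases according to whether $n'^2-n^2$ vanishes.

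If $n'^2\neq n^2$, we can solve for $(a/b)^2=(m^2-m'^2)/(n'^2-n^2)$. This is a quotient of integers, hence rational, which contradicts the hypothesis. Therefore $n'^2=n^2$, and the displayed identity then forces $m'^2=m^2$.

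Since all of $m,n,m',n'$ are positive integers, it follows that $m'=m$ and $n'=n$. Consequently the only lattice point in the open first quadrant on the ellipse \eqref{eq:mult-r} with $\lambda=\lambda_{m,n}$ is $(m,n)$ itself.

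The multiplicity equals the number of such lattice points, because the functions $V_{m,n}$ form a complete orthogonal system of eigenfunctions (as cited from \cite{Davies95}). Hence $\mult(\lambda_{m,n})=1$.

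There is no real obstacle in this argument. The only point that needs care is justifying that the multiplicity is exactly the count of pairs $(m,n)$, and for that we rely on the completeness of the separated eigenfunctions, which the paper has already invoked.
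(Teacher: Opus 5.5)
Your argument is correct. Equating $\lambda_{m,n}=\lambda_{m',n'}$ and isolating $(a/b)^2=(m^2-m'^2)/(n'^2-n^2)$ forces $n'=n$ and then $m'=m$, and the identification of multiplicity with the first-quadrant lattice-point count is exactly what the paper records in \eqref{eq:mult-r}. The paper gives no proof here; it calls this a simple observation and cites \cite{Fu2025-ph}. Your rationality argument is the standard one, and the same step appears in the paper's proof of Lemma~\ref{lm:a}, where $c(y'^2-y^2)$ is isolated to force $y'^2=y^2$.
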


We now recall relevant well-known facts about eigenvalues and eigenfunctions for the Laplace operator on two-dimensional tori (see, for example, \cite{Milnor64}).  Let $L$ be the two dimensional lattice generated by  $\vec{a}=(a_1, \ a_2)$ and $\vec{b}=(b_1, b_2)$:
\begin{equation}\label{eq:L}
L = \{ m \vec{a} + n \vec{b} \mid m, n \in \mathbb{Z} \}
\end{equation}
Let
\[
M = 
\begin{bmatrix}
	a_1 & b_1 \\
	a_2 & b_2
\end{bmatrix}
\]
be the associated matrix with $D = \det(M) = a_1 b_2 - a_2 b_1 \neq 0$.  Let $\theta$ be the angle between $\vec{a}$ and $\vec{b}$. Replacing $\vec{b}$ by $-\vec{b}$ if necessary, we assume $\theta\in (0, \ \pi/2]$. The dual of \( L \) is then given by
\[
L^* = \{ w\in \mathbb{R}^2 \mid w \cdot z\in \mathbb{Z} ,  \quad \forall z \in L \}.
\]
The associated matrix for \( L^* \) is given by
\[
M_{\psi} = (M^{-1})^T = \frac{1}{D} \begin{bmatrix}
	b_2 & -a_2 \\
	-b_1 & a_1
\end{bmatrix} = [\vec{u}, \vec{v}].
\]

The eigenvalues for the Laplace operator on the torus $T=\R^2/L$ are 
\[
(2\pi)^2 \cdot |w|^2
\]
with associated eigenfunctions
\[
e(z) = \exp(2\pi i \, z \cdot w), \quad w \in L^*.
\]
Write  \( w = x\vec{u} + y\vec{v}, \; x, y \in \mathbb{Z} \). Then
\begin{align*}
	|w|^2  &= x^2|\vec{u}|^2 + 2xy \vec{u}\cdot\vec{v}+ y^2|\vec{v}|^2 \\
	&= x^2|\vec{b}|^2 - 2xy\vec{a}\cdot\vec{b}+ y^2|\vec{a}|^2 \\
	&= |\vec{b}|^2 \bigl( x^2 - 2r \cos \theta xy+ r^2 y^2\bigr),
\end{align*}
where \( r = |\vec{a}|/|\vec{b}|\).

Thus, the multiplicity of the eigenvalue \( (2\pi)^2 |w|^2 \) is the number of integral lattice points on the ellipse defined by
\begin{equation}\label{eq:quadratic}
\tilde f(x, y):=x^2 - 2 r \cos \theta xy + r^2 y^2= \frac{|w|^2}{|\vec{b}|^2}.
\end{equation}
Note that each non‑zero eigenvalue has multiplicity at least \( \geq 2 \). Analogous to Lemma~\ref{lm:1}, we have the following result: 

\begin{lemma}\label{lm:2}
	If the Laplacian on the torus  \( T=\R^2/L \) has an eigenvalue of multiplicity \( \geq 6 \), then both \( r^2 \) and \( r\cos(\theta) \) are rational.
\end{lemma}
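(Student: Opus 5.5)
Write $A=2r\cos\theta$ and $B=r^2$, so the multiplicity of an eigenvalue is the number of integer points $(x,y)$ on the level set $x^2-Axy+By^2=c$ of \eqref{eq:quadratic}. The solutions come in pairs $\pm(x,y)$. Multiplicity $\ge 6$ therefore gives at least three solutions that are pairwise not $\pm$ each other. I will turn these into a linear system in the unknowns $(A,B)$ with integer coefficients, and argue that the system has a unique solution, which must then be rational.

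First I discard the solutions with $y=0$. There are at most two of them, namely $(\pm x_0,0)$, so at least two non-antipodal solutions have $y\neq 0$. Suppose first that a solution $(x_0,0)$ exists, so $c=x_0^2\in\Q$. Pick a solution $(x_1,y_1)$ and a solution $(x_2,y_2)$ with $y_i\ne0$, not antipodal to each other. The equations
\[
x_iy_iA-y_i^2B=x_i^2-c,\qquad i=1,2,
\]
form a linear system with rational coefficients. Its determinant is
\[
-x_1y_1y_2^2+x_2y_2y_1^2=y_1y_2(x_2y_1-x_1y_2).
\]
This vanishes only if $(x_1,y_1)$ and $(x_2,y_2)$ are proportional. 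Two proportional integer points on the same ellipse centered at the origin must be equal or antipodal, since $t^2c=c$ forces $t=\pm1$. That is excluded, so the system has a unique solution and $A,B\in\Q$.

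In general, however, $c$ is not known to be rational. I then use three non-antipodal solutions and treat $(A,B,c)$ as unknowns:
\[
x_iy_iA-y_i^2B+c=x_i^2,\qquad i=1,2,3.
\]
This is a linear system with integer coefficients in the unknowns $(A,B,c)$. If the solution is unique, Cramer's rule gives $A,B,c\in\Q$, so $r^2=B$ and $r\cos\theta=A/2$ are rational.

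The key step is showing that the determinant $\det[x_iy_i,\,-y_i^2,\,1]$ is nonzero. Suppose it vanishes. Then some nontrivial combination $\alpha xy+\beta y^2+\gamma$ vanishes at all three points. Together with the ellipse equation, this would place three non-antipodal points, that is six points counting the $\pm$ pairs, on the intersection of the ellipse with a degenerate conic of that restricted form. I will argue this is impossible by cases. If $\alpha=\beta=0$, then $\gamma=0$, a contradiction. Otherwise the conic $\alpha xy+\beta y^2+\gamma=0$ can be rewritten using the ellipse equation so that $\gamma$ is absorbed into a quadratic form. Concretely, I subtract $(\gamma/c)(x^2-Axy+By^2-c)$, assuming $c>0$, which holds for nonzero eigenvalues. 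This yields a homogeneous quadratic form that vanishes at all three points. A nonzero binary quadratic form vanishes on at most two lines through the origin, so at most two non-antipodal pairs of points on the ellipse. The exceptional case is when the form is identically zero, that is, when $(\alpha,\beta,\gamma)$ is proportional to $(-A,B,-c)$ restricted appropriately. In that case the $x^2$ coefficient $-\gamma/c$ must vanish, so $\gamma=0$, and then $\alpha=\beta=0$, a contradiction.

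This determinant nonvanishing argument is the main obstacle, and the homogenization trick is what I expect to carry it.
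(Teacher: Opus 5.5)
Your proof is correct, and it takes a genuinely different route from the paper's, although both hinge on the same nondegeneracy fact.

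\textbf{The paper's route.} It writes $\tilde f=(x-r\cos\theta\,y)^2+(r\sin\theta)^2y^2$ and expresses $(r\sin\theta)^2$ in two ways from pairs of solutions. Equating them, the quadratic terms in $r\cos\theta$ cancel, leaving one linear equation $(r\cos\theta)\,t=s$ with $s,t\in\mathbb{Q}$. It shows $t\neq0$ as follows: $t=0$ means the points $(x_iy_i,y_i^2)$ are collinear, so one can write $x_i=\lambda y_i+\beta/y_i$, and then the ellipse equation becomes quadratic in $y^2$. Finally $r^2$ is recovered by back-substitution.

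\textbf{Comparison.} Your determinant $\det[x_iy_i,\,-y_i^2,\,1]$ vanishes exactly when those same three points are collinear, so $t\neq0$ and your invertibility claim are the same statement. What you gain lies in how you prove it and how you package the conclusion. Solving the integer $3\times 3$ system for $(A,B,c)$ gives both rationality conclusions at once. Your homogenization argument (a real binary quadratic form vanishing on three distinct lines through the origin is identically zero) works for every configuration of the three points.

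The paper's version needs representatives with $0\le y_1<y_2<y_3$, and this can fail. For $x^2-xy+y^2=1$, the three antipodal pairs $\pm(1,0),\pm(0,1),\pm(1,1)$ have $|y|$-values $0,1,1$. The paper also divides by $y_i$ even though $y_1=0$ is allowed. Your argument needs neither assumption. The paper's elimination stays closer to the sum-of-squares geometry of $\tilde f$, but yours is shorter and covers all cases.

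\textbf{Two minor fixes.}
\begin{itemize}
\item To kill the constant term you must \emph{add} $(\gamma/c)(x^2-Axy+By^2-c)$ to $\alpha xy+\beta y^2+\gamma$, not subtract it; subtracting leaves the constant $2\gamma$. With the correct sign, the $x^2$-coefficient is $\gamma/c$, and the rest of your argument goes through unchanged.
\item Your preliminary case with a solution $(x_0,0)$ is unnecessary, since the three-unknown argument already covers every configuration.
\end{itemize}
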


\begin{proof}  Under the assumption, there exist at least three pairs of integral lattice points
\[
(x_1, y_1), \; (x_2, y_2),\; \text{and}\; (x_3, y_3) \; \text{with } 0 \leq y_1 < y_2 < y_3
\]
such that
\begin{equation}\label{eq:m}
\tilde f(x_1, y_1) = \tilde f(x_2, y_2) = \tilde f(x_3, y_3).
\end{equation}
Observe that
\[
\tilde f(x,y) = \bigl( x - r\cos\theta \, y \bigr)^2 + (r\sin\theta)^2 y^2.
\]
From \eqref{eq:m} we obtain
\begin{equation}\label{eq:sin}
	\begin{aligned}
(r\sin\theta)^2 &= \frac{(x_1 - r\cos\theta \, y_1)^2 - (x_2 - r\cos\theta \, y_2)^2}{y_2^2 - y_1^2}\\
&= \frac{(x_1 - r\cos\theta \, y_1)^2 - (x_3 - r\cos\theta \, y_3)^2}{y_3^2 - y_1^2}.
\end{aligned}
\end{equation}
This leads to
\begin{align*}
	&\left[ \frac{x_1 - x_2}{y_2 - y_1} + r \cos \theta \right] \left[ \frac{x_1 + x_2}{y_2 + y_1} - r \cos \theta \right] \\
	&\quad = \left[ \frac{x_1 - x_3}{y_3 - y_1} + r \cos \theta \right] \cdot \left[ \frac{x_1 + x_3}{y_3 + y_1} - r \cos \theta \right].
\end{align*}

Set
\[
s=\frac{x_1 - x_2}{y_2 - y_1} \cdot \frac{x_1 + x_2}{y_2 + y_1}
- \frac{x_1 - x_3}{y_3 - y_1} \cdot \frac{x_1 + x_3}{y_3 + y_1}
\]
and
\[
t=\frac{x_1 + x_3}{y_3 + y_1} - \frac{x_1 - x_3}{y_3 - y_1}
- \frac{x_1 + x_2}{y_2 + y_1} + \frac{x_1 - x_2}{y_2 - y_1}.
\]
Then
\begin{equation}\label{eq:cos}
(r\cos\theta) t=s.
\end{equation}
We claim that $t\not=0$.  We prove this by contradiction. Suppose $t=0$. Then
\begin{equation}\label{eq:colinear}
	\frac{x_3y_3 - x_1y_1}{y_3^2 - y_1^2} = \frac{x_2y_2 - x_1y_1}{y_2^2 - y_1^2}.
\end{equation}
Thus, the points $(x_iy_i,\ y_i^2)$, $i=1, 2, 3$ are collinear. Let $\lambda$ be the quotient given 
by \eqref{eq:colinear} and let $\beta=x_1y_1-\lambda y_1^2$.  Then 
\[
x_i=\lambda y_i+\beta/y_i, \quad i=1, 2, 3.
\]
Since all three points $(x_i,\ y_i)$, $i=1, 2, 3$ lie on the ellipse defined by \eqref{eq:quadratic}, $y_i$, $i=1, 2, 3$ are
the solutions to  
\[
A(\lambda y+\beta/y)^2+B(\lambda y+\beta/y)y+Cy^2=D
\]
where $A=1$, $B=-2r\cos\theta$, $C=r^2$, and $D=|w|^2/|\vec{b}|^2$.  The above equation reduces to
\begin{equation}\label{eq:n2}
(A\lambda^2+B\lambda+C)y^4+(2A\lambda\beta+B\beta-D)y^2+A\beta^2=0,
\end{equation}
which is a quadratic equation in $y^2$. (Note that the coefficient of $y^4$ in the above 
equation is positive because $B^2-4AC<0$.)  It is impossible for \eqref{eq:n2} to have 
three distinct positive solutions.  This concludes the proof of the claim.
  
From \eqref{eq:cos}, we then obtain that \( r \cos \theta \in \mathbb{Q} \).  It then follows from \eqref{eq:sin} that \( (r\sin\theta)^2 \in \mathbb{Q} \).  
Therefore
\[
r^2 = (r\cos\theta)^2 + (r\sin\theta)^2 \in \mathbb{Q}.
\]
\end{proof}

As in the previous section, the set \( \mathcal{M} (L)\) of multiplicities of non-zero eigenvalues of the Laplacian on tori is related to quadratic representations of integers. Let
\[
2r\cos\theta = \frac{\alpha}{\beta}, \qquad r^2 = \frac{\gamma}{\delta},
\]
where $\alpha, \beta, \gamma$, and $\delta$ are positive integers such that $\gcd(\alpha, \beta)=1$ and $\gcd(\gamma, \delta)=1$. Then
\[
\tilde{f}(x, y) = x^2 - \frac{\alpha}{\beta} xy + \frac{\gamma}{\delta} y^2
= \frac{1}{\beta\delta} \bigl( \beta\delta  x^2 - \alpha\delta xy + \gamma\beta y^2 \bigr).
\]
Note that
\[
\gcd(\beta\delta, \ \alpha\delta,\ \gamma\beta)=\gcd(\delta\cdot\gcd(\beta,\ \alpha),\ \gamma\beta)
=\gcd(\delta,\ \beta).
\]
Let $\tau=\gcd(\delta,\ \beta)$ and let
\[
f(x,y) := a x^2 + b xy + c y^2=\frac{1}{\tau}(\beta\delta x^2 - \alpha\delta xy + \gamma\beta y^2).
\]
Then $f$ is a primitive positive-definite binary quadratic form. In this case, the study of multiplicities of eigenvalues of the Laplace operator on the torus \( T=\R^2/L \) is reduced to studying 
the number of quadratic representations of positive integers by $f(x, y)$.

\subsection{Quadratic Representation of Integers}\label{s:prelim:q}

For completeness, we recall some basic definitions and facts regarding the quadratic representations of integers. We refer the reader to \cite[Chapter 3]{Niven-et-al} and \cite{Cox}
for background materials. 

A binary quadratic form 
\[
Q(x,y)=ax^2+bxy+cy^2
\]
is called \textit{primitive} if $\gcd(a,b,c)=1$ and \textit{positive-definite} if $Q(x,y)>0$ for $(x,y)\in\mathbb{Z}^2\setminus\{(0,0)\}$.

Much of our analysis revolves around the discriminant $\Delta:=b^2-4ac$ of $Q$. It is worth noting that if $Q$ is positive-definite, then $\Delta<0$. The discriminant $\Delta$ is called \textit{fundamental} if $\Delta\neq 1$, $p^2\nmid \Delta$ for any odd prime $p$, and $\Delta$ satisfies one of 
\[
 \begin{cases}
     \Delta\equiv 1\pmod{4}\\
     \Delta\equiv 8,12\pmod{16}
 \end{cases}.
\]
For any discriminant $\Delta$, there exists a unique pair of integers $\Delta_0$ and $f>0$ satisfying
\[
\Delta=\Delta_0f^2
\]
such that $\Delta_0$ is a fundamental discriminant. The integer $f$ is called the \textit{conductor} of $\Delta$. If the conductor $f\neq 1$, then $\Delta$ is called \textit{non-fundamental}.

The automorphism group $\text{Aut}(Q)$ associated to the quadratic form $Q$ is a subgroup of $\text{GL}_2(\mathbb{Z})$ consisting of 
\[
M=\begin{pmatrix}
    p&q\\
    r&s
\end{pmatrix}\in\text{GL}_2(\mathbb{Z})
\]
that satisfies
\[
Q(px+qy,rx+sy)=Q(x,y)
\]
for every $x,y\in\mathbb{Z}$. We call the elements in the subgroup $\text{Aut}^+(Q):=\text{SL}_2(\mathbb{Z})\cap \text{Aut}(Q)$ the \textit{proper automorphisms of $Q$}, and those in the subset $\text{Aut}^{-}(Q)=\text{Aut}(Q)\setminus \text{Aut}^+(Q)$ the \textit{improper automorphisms of $Q$}.

With such sets of automorphisms in mind, we define three counting functions $R_Q(n)$ which counts all representations of $n$ by $Q$, $r^+_Q(n)$ which counts all representations of $n$ by $Q$ \textit{up to proper automorphism}, and $r_Q(n)$ which counts all representations of $n$ by $Q$ \textit{up to any automorphism}.

Two forms $Q_1,Q_2$ of the same discriminant $\Delta$ belong to the same \textit{form class} if they are \textit{properly equivalent}, that is there exists some matrix 
\[
\begin{pmatrix}
    p&q\\
    r&s
\end{pmatrix}\in\text{SL}_2(\mathbb{Z})
\]
such that
\[
Q_1(px+qy,rx+sy)=Q_2(x,y)
\]
for all $x,y\in\mathbb{Z}$. The set of all form classes forms together with Gaussian composition form an abelian group $C(\Delta)$ called the \textit{form class group}. A form
\[
Q_1(x, y)=ax^2+bxy+cy^2
\]
is called \textit{ambiguous} if it is properly equivalent to 
\[
Q_2(x, y)=ax^2-bxy+cy^2,
\]
or equivalently, if its class in the form class group is of order $\leq 2$.

Let $K:=\mathbb{Q}(\sqrt{\Delta})$. We denote the ring of integers of $K$ as $\mathcal{O}_K$ and define the \textit{order of $\Delta$} to be
\[
\mathcal{O}:=\mathbb{Z}+f\mathcal{O}_K.
\]
The maximal order $\mathcal{O}_K$ is a Dedekind domain and thus possesses unique factorization of ideals into prime ideals. The order of $\Delta$, $\mathcal{O}$ may not be a Dedekind domain, but $\mathcal{O}$-ideals with absolute norm relatively prime to $f$ can still be factored uniquely into prime ideals.

We notate the set of proper fractional $\mathcal{O}$-ideals as $I(\mathcal{O})$ (which forms a group under ideal multiplication) and the set of principal $\mathcal{O}$-ideals as $P(\mathcal{O})$ (which forms a subgroup of the aforementioned group $I(\mathcal{O})$). The quotient group defined by 
\[C(\mathcal{O}):=I(\mathcal{O})/P(\mathcal{O})
\]
is called the \textit{ideal class group}. A class in the ideal class group is called \textit{ambiguous} if it is of order $\leq 2$.

The central result for our counting of quadratic representations is the isomorphism between the form class group $C(\Delta)$ and the ideal class group $C(\mathcal{O})$, proven as Theorem 7.7 in \cite{Cox}. The proof of Lemma \ref{pair ideal correspondence} should give the reader a sense of the shape of this more general correspondence. (Note that ambiguous forms are involutive with respect to Gaussian composition, and therefore classes of ambiguous forms correspond to ambiguous ideal classes.)

Specifically, we consider an immediate consequence of this isomorphism: there is a one-to-one correspondence between integral ideals in the ideal class associate with a form $Q$ with norm $n$ and solutions to $Q(x,y)=n$, up to proper automorphism (stated as Lemma 2.1 in \cite{BG06}).

We also assume throughout that every primitive, positive-definite form $Q$ represents infinitely many primes. This is a classical result of Heinrich Weber \cite{Weber1882}, accessible treatments of which can be found in \cite{Bri54,SW92}. This theorem possesses considerable significance in the history of the development of density theorems from Dirichlet's theorem to the Chebotarev density theorem.

\section{Multiplicities on Rectangles}\label{s:rectangles}

In this section, we prove Theorems~\ref{M} and~\ref{Q}. We begin with the following simple lemma.

\begin{lemma}\label{prime uniqueness}
For any $m,n\in\mathbb{N}$ coprime such that $mn>1$, any prime $p$ represented by the quadratic form
\[
Q(x,y):=mx^2+ny^2
\]
is represented uniquely, up to signs.
\end{lemma}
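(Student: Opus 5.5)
The plan is to give the classical elementary argument for uniqueness of representations of primes by diagonal forms. Suppose
\[
p = mx^2+ny^2 = mu^2+nv^2
\]
with $x,y,u,v\in\mathbb{Z}$. The goal is to show $(u,v)=(\pm x,\pm y)$.

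First, all four of $x,y,u,v$ are nonzero unless $p\in\{m,n\}$. If $y=0$, then $p=mx^2$ forces $m=p$ and $x=\pm1$ (note $m=1$ would give $p=x^2$, which is impossible). The equation $p=pu^2+nv^2$ then forces $v=0$ and $u=\pm1$, since $n\ge1$ and $v\neq0$ would give $nv^2\ge1$ while $pu^2\in\{0\}\cup[p,\infty)$; more precisely, $u=0$ gives $nv^2=p$, so $n=p=m$, contradicting coprimality with $mn>1$. The case $x=0$ is symmetric. So I assume $xyuv\neq0$.

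The key identities come from multiplying the two representations:
\[
p^2=(mxu\pm nyv)^2+mn(xv\mp yu)^2 .
\]
Next I show $p$ divides one of $xv-yu$, $xv+yu$. From $mx^2\equiv -ny^2$ and $mu^2\equiv -nv^2 \pmod p$ we get $mn\,x^2v^2\equiv mn\,y^2u^2$, that is, $mn(xv-yu)(xv+yu)\equiv0\pmod p$. Here $p\nmid mn$: if $p\mid m$, then $p\mid ny^2$; since $\gcd(m,n)=1$ this gives $p\mid y$, so $p^2\mid ny^2$, and as $p\mid mx^2$ too we get $p\mid p-ny^2=mx^2$. Combined with $p=mx^2+ny^2\ge ny^2\ge p^2$, this is impossible unless $y=0$, which is excluded. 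So $p$ divides $xv-yu$ or $xv+yu$. Choose the sign so that $p\mid xv\mp yu$; the identity then has $p^2\ge mn(xv\mp yu)^2$. Since $mn>1$, this forces $xv\mp yu=0$: otherwise $(xv\mp yu)^2\ge p^2$ and the right side would exceed $p^2$.

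Finally, from $xv=\pm yu$, together with $\gcd(x,y)=\gcd(u,v)=1$ (any common factor squared divides the prime $p$), we get $(u,v)=\pm(x,\pm y)$. That is uniqueness up to signs.

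The main obstacle is the bookkeeping in the degenerate cases: $p\mid mn$, or one of the coordinates vanishing. The core step, where the hypothesis $mn>1$ is exactly what rules out a nonzero multiple of $p$, is routine.
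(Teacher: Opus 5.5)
Your proposal is correct and follows essentially the same route as the paper: you multiply the two representations via the identities $p^2=(mxu\pm nyv)^2+mn(xv\mp yu)^2$, use the congruences mod $p$ to get $p$ dividing one of the cross terms, and then use $mn>1$ to force that term to vanish before concluding by primitivity. Your explicit check that $p\nmid mn$ (along with the vanishing-coordinate cases) supplies a step the paper uses tacitly when it passes from $p\mid mnD^2$ to $p\mid D$.
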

\begin{proof} Assume, for sake of contradiction, $p$ admits two representations 
\[
(x_1,y_1),(x_2,y_2) \in \mathbb{Z}^2
\]
such that $(x_1,y_1)\neq (\pm x_2,\pm y_2)$, that is 
\[
p=mx_1^2+ny_1^2=mx_2^2+ny_2^2.
\]

Taking the product of these two representations, we have
\[
p^2=(mx_1^2+ny_1^2)(mx_2^2+ny_2^2).
\]
By the generalized version of Brahmagupta's identity, we have a system of equalities
\begin{equation}\label{Brahmagupta1.eq}
    p^2=(mx_1x_2+ny_1y_2)^2+mn(x_1y_2-x_2y_1)^2
\end{equation}
\begin{equation}\label{Brahmagupta2.eq}
    p^2=(mx_1x_2-ny_1y_2)^2+mn(x_1y_2+x_2y_1)^2
\end{equation}
and a system of congruences 
\[\begin{cases}
    mx_1^2\equiv- ny_1^2 \pmod p\\
    mx_2^2\equiv- ny_2^2 \pmod p
\end{cases},\]
which when multiplied result in 
\[mx_1x_2\equiv\ \pm ny_1y_2 \pmod p\]

If $mx_1x_2\equiv\ - ny_1y_2 \pmod p$, then by equation \ref{Brahmagupta1.eq} we must have that 
\[
D:=x_1y_2-x_2y_1\equiv 0 \pmod p.
\]
Again, from equation \ref{Brahmagupta1.eq}, we have
\[mnD^2\leq p^2\]
and thus, by our assumption that $mn>1$
\[|D|\leq\dfrac{p}{\sqrt{mn}}<p.\]
The only integer multiple of $p$ satisfying this inequality is $0$, meaning $x_1y_2=x_2y_1$. By coprimality, this implies equality of the solutions up to signs, a contradiction.

For the case when $mx_1x_2\equiv ny_1y_2\pmod p$, a similar contradiction results from a near identical proof to the previous case, instead utilizing equation \ref{Brahmagupta2.eq}.
\end{proof}

\begin{lemma}\label{pair ideal correspondence}
Let $m,n \in \mathbb{N}$ be coprime such that $mn>3$. Consider the binary quadratic form
\[
Q(x,y) := mx^2 + ny^2.
\]

If $Q(x_1,y_1) = p^{2k-1}$ for some prime $p$, natural number $k$, and integers $(x_1,y_1)$, then 
\begin{enumerate}
    \item[i.] There exists an ideal 
    \[
    \mathfrak{p}_{1,k} = (p, mx_1 + y_1\sqrt{-mn})^{2k-1}
    \] in the ring of integers $\mathcal{O}_K$ of $K=\mathbb{Q}(\sqrt{-mn})$ such that $N(\mathfrak{p}_{1,k})=p^{2k-1}$ and $\mathfrak{p}_{1,k}$ belongs to the ideal class $C = [(m, \sqrt{-mn})]$.
    
    \item[ii.] If $(x_2,y_2) \in \mathbb{Z}^2$ is another solution such that $Q(x_2,y_2)=p^{2k-1}$ and $(x_2,y_2) \neq (\pm x_1, \pm y_1)$, then the ideal $\mathfrak{p}_{2,k}$ corresponding to $(x_2,y_2)$ is distinct from $\mathfrak{p}_{1,k}$.
\end{enumerate}
\end{lemma}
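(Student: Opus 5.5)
Our approach is to work directly with the element $\alpha_1 := mx_1 + y_1\sqrt{-mn}\in\mathcal{O}_K$, which factors nicely through the ideal $\mathfrak{m}:=(m,\sqrt{-mn})$. The key computation is
\[
N(\alpha_1) = m^2x_1^2 + mn y_1^2 = m\,Q(x_1,y_1) = m\,p^{2k-1}.
\]
Next we note that $\mathfrak{m}^2 = (m^2, m\sqrt{-mn}, -mn) = (m)$, since $\gcd(m,n)=1$. Hence $\mathfrak{m}$ has norm $m$ and $[\mathfrak{m}]$ has order $\le 2$, so it is an ambiguous class; in particular $[\mathfrak{m}]=[\mathfrak{m}]^{-1}$. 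Both $m$ and $\sqrt{-mn}$ divide $\alpha_1$ into $\mathfrak{m}$, so $\alpha_1\in\mathfrak{m}$. We therefore define $\mathfrak{a}_1 := (\alpha_1)\mathfrak{m}^{-1}$, an integral ideal of norm $p^{2k-1}$ lying in the class $[\mathfrak{m}]^{-1}=[\mathfrak{m}]=C$.

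The remaining task for (i) is to identify $\mathfrak{a}_1$ with the displayed ideal $(p,\alpha_1)^{2k-1}$, or at least to show that $\mathfrak{a}_1$ is the $(2k-1)$-th power of a prime $\mathfrak{p}$ above $p$. We first check that $p\nmid mn$. If $p\mid m$, then $p\mid ny_1^2$, and since $p\nmid n$ we get $p\mid y_1$, which leads to a descent contradiction unless the representation is imprimitive; the proper-representation assumption implicit in the paper should be used here, or else that case is handled separately. So $p$ splits as $\mathfrak{p}\bar{\mathfrak{p}}$. We then show that $\mathfrak{a}_1$ is divisible by only one of $\mathfrak{p},\bar{\mathfrak{p}}$. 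If both divided it, then $p\mid \alpha_1\mathfrak m^{-1}$, which gives $p\mid x_1$ and $p\mid y_1$, again an imprimitivity issue. Thus $\mathfrak{a}_1=\mathfrak{p}^{2k-1}$. Since $\alpha_1\in\mathfrak{p}$ and $\alpha_1\notin\bar{\mathfrak p}$, we get $(p,\alpha_1)=\mathfrak{p}$, which gives the stated form. The norm is then $p^{2k-1}$ and the class is $C$, as required.

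For (ii), suppose $\mathfrak{a}_2=\mathfrak{a}_1$. Then $(\alpha_1)\mathfrak m^{-1}=(\alpha_2)\mathfrak m^{-1}$, so $(\alpha_1)=(\alpha_2)$ and $\alpha_2=u\alpha_1$ for a unit $u$. The hypothesis $mn>3$ gives $-mn\ne -1,-3$, so $\mathcal{O}_K^\times=\{\pm1\}$ (this is exactly where $mn>3$ enters). Hence $mx_2+y_2\sqrt{-mn}=\pm(mx_1+y_1\sqrt{-mn})$, so $(x_2,y_2)=\pm(x_1,y_1)$, a contradiction. A subtle point remains, namely that $(x_1,-y_1)$ yields $\bar\alpha_1$ and hence the conjugate ideal $\bar{\mathfrak p}^{2k-1}$, which is also distinct. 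This is consistent with the statement's convention of distinct ideals for sign-distinct solutions.

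The main obstacle is the integrality and primitivity bookkeeping in (i). We must show that $\mathfrak{m}^{-1}(\alpha_1)$ is a pure power of a single prime, which requires ruling out $p\mid\gcd(x_1,y_1)$ and $p\mid m$. We must also handle the case where $-mn$ is not squarefree or $\equiv 3\pmod 4$, so that $\mathcal O_K$ strictly contains $\mathbb Z[\sqrt{-mn}]$; in that case we would work in the order $\mathbb Z[\sqrt{-mn}]$, noting that $p$ is prime to the conductor. We would first try to settle these via Lemma~\ref{prime uniqueness} style congruence arguments.
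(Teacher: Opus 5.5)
There is a genuine gap in each part. In (i), the real content of the claim, that the ideal is $(p,\alpha_1)^{2k-1}$, is left open, and the steps you sketch toward it are false.

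\begin{itemize}
\item A prime $p\mid m$ gives no descent contradiction. For $m=5$, $n=1$, $k=1$, the primitive solution $(1,0)$ has $(p,\alpha_1)=(5)$, of norm $25$.
\item The step ``$\mathfrak p$ and $\bar{\mathfrak p}$ both divide $\mathfrak a_1$ implies $p\mid x_1,y_1$'' fails when $\mathcal O_K\neq\mathbb Z[\sqrt{-mn}]$. For $x^2+7y^2=8$ at $(1,1)$ we have $1+\sqrt{-7}=2\cdot\tfrac{1+\sqrt{-7}}{2}$. So $\mathfrak a_1=\mathfrak p^2\bar{\mathfrak p}$ with $\mathfrak p=\bigl(\tfrac{1+\sqrt{-7}}{2}\bigr)$, and $(2,\alpha_1)=(2)$.
\item Ramified $p$ is not treated.
\item For an imprimitive solution with $k\ge2$, the displayed ideal is $(p)^{2k-1}$, of norm $p^{4k-2}$.
\end{itemize}

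So under your reading, with $(x_1,y_1)$ any solution of $Q=p^{2k-1}$, (i) is false and no bookkeeping will rescue it.

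The paper instead reduces to $k=1$. There $(x_1,y_1)$ represents $p$ itself, hence is primitive, and $\mathfrak p_{1,k}$ is the $(2k-1)$-th power of $(p,\beta_1)$. The paper shows $(p,\beta_1)(p,\bar\beta_1)=(p)$ using $\gcd(p,m)=1$; this is assumed tacitly, and the $m=5$ example shows it is necessary. It then obtains $(\beta_1)=(p,\beta_1)\mathfrak m$ from comaximality and a norm count, and uses $C=C^{-1}$. Your $\mathfrak a_1=(\alpha_1)\mathfrak m^{-1}$ handles that version at once: for $k=1$ and $p\nmid m$, $(p,\alpha_1)=\gcd(\mathfrak a_1\bar{\mathfrak a}_1,\mathfrak a_1\mathfrak m)=\mathfrak a_1$, and then one takes $(2k-1)$-th powers.

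In (ii), $mn>3$ does not give $\mathcal O_K^\times=\{\pm1\}$, because $K$ sees only the squarefree part of $mn$. For example, $m=1$, $n=25$ gives $\mathbb Q(i)$, and $m=3$, $n=4$ gives $\mathbb Q(\sqrt{-3})$. This matters. For $m=1$, $n=25$, $p=5$, $k=2$, the solutions $(5,2)$ and $(-10,1)$ of $Q=125$ are not sign-related, yet $-10+5i=i(5+10i)$. So your two ideals coincide, and so do the displayed ones, both equal to $(125)$.

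The remedy is again to assume $p\nmid mn$. If $\alpha_2=u\alpha_1$, then $u\,mp^{2k-1}=\alpha_2\bar\alpha_1=m\bigl[(mx_1x_2+ny_1y_2)+(x_1y_2-x_2y_1)\sqrt{-mn}\bigr]$. For non-real $u$, comparing imaginary parts gives $(x_1y_2-x_2y_1)f=\pm p^{2k-1}$ or $\pm p^{2k-1}/2$, where $mn=f^2$ or $3f^2$ with $f\ge2$. Either case forces $p\mid mn$.

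The paper avoids units entirely. From $(p,\beta_1)=(p,\beta_2)$ it gets $x_1y_2\equiv x_2y_1\pmod p$, and kills this determinant with $|x_1y_2-x_2y_1|\le p/\sqrt{mn}<p$. That bound uses $Q(x_i,y_i)=p$, i.e.\ again the $k=1$ reading. Once repaired, your unit-based argument is a genuine alternative: it treats every $k$ at once, directly for the ideals $(\alpha)\mathfrak m^{-1}$.
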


\begin{proof}
Let $K = \mathbb{Q}(\sqrt{-mn})$. We work with the element $\sigma = \sqrt{-mn}$, noting that $\sigma \in \mathcal{O}_K$ (even if it does not form an integral basis when $d \equiv 1 \pmod 4$).

To prove part (i), it suffices to prove the statement for $k=1$. Given a solution $Q(x_i,y_i) = p$, define the element $\beta_i \in \mathcal{O}_K$ by
\[
\beta_i = mx_i + y_i\sqrt{-mn}.
\]
Define the ideal $\mathfrak{p}_i = (p, \beta_i) \subseteq \mathcal{O}_K$.

We verify the norm by computing the ideal product $\mathfrak{p}_i \bar{\mathfrak{p}_i}$:
\[
\mathfrak{p}_i \bar{\mathfrak{p}_i} = (p, \beta_i)(p, \bar{\beta_i}) = (p^2, p\bar{\beta_i}, p\beta_i, \beta_i\bar{\beta_i}).
\]
Computing the norm of the generator $\beta_i$:
\[
N(\beta_i) = \beta_i\bar{\beta_i} = (mx_i)^2 - d y_i^2 = m^2x_i^2 + mn y_i^2 = m(mx_i^2 + ny_i^2).
\]
Since $mx_i^2 + ny_i^2 = p$, we have $N(\beta_i) = mp$. Substituting this back into the generators:
\[
\mathfrak{p}_i \overline{\mathfrak{p}_i} = (p^2, p\overline{\beta_i}, p\beta_i, mp) = p(p, \overline{\beta_i}, \beta_i, m).
\]
Since $\gcd(p,m)=1$, the ideal $(p, \dots, m)$ is the unit ideal $(1)$. Thus $\mathfrak{p}_i \overline{\mathfrak{p}_i} = (p)$, and taking norms gives $N(\mathfrak{p}_i) = p$.

We now establish $[\mathfrak{p}_i] = C$, the ideal class corresponding to the form which is represented by $\mathfrak{m} = (m, \sqrt{-mn})$.
The sum of ideals $\mathfrak{p}_i+\mathfrak{m}=(p,\beta_i,m,\sqrt{-mn})=(1)$ by the comprimality of $p$ and $m$. Therefore, since $\mathcal{O}_K$ is a Dedekind domain, we have
\[
\beta_i\in\mathfrak{p}_i\cap\mathfrak{m}=(\mathfrak{p}_i\mathfrak{m})(\mathfrak{p}_i+\mathfrak{m})=\mathfrak{p}_i\mathfrak{m}
\]
implying $(\beta_i)\subseteq \mathfrak{p}_i\mathfrak{m}$.

A check of norms confirms equality: $N((\beta_i)) = m p$ and $N(\mathfrak{p}_i \mathfrak{m}) = mp$. Since $(\beta_i) \subseteq \mathfrak{p}_i \mathfrak{m}$ and they have the same norm, $(\beta_i) = \mathfrak{p}_i \mathfrak{m}$.
In the class group, $[\mathfrak{p}_i][\mathfrak{m}] = [(\beta_i)] = 1$.
Since the form $Q$ is ambiguous, the class $C = [\mathfrak{m}]$ satisfies $C = C^{-1}$ via \cite{Gauss} \S 249. Thus $[\mathfrak{p}_i] = C$.

Suppose $(x_1,y_1)$ and $(x_2,y_2)$ are two solutions with corresponding ideals
\[
\mathfrak{p}_{1,k}=(p,\beta_1)^{2k-1}\]
\[\mathfrak{p}_{2,k}=(p,\beta_2)^{2k-1}
\]
and assume $\mathfrak{p}_{1,k}=\mathfrak{p}_{2,k}$. Then 
\[
\mathfrak{p}_{1,1}^{2k-1}=\mathfrak{p}_{2,1}^{2k-1}
\]
and 
\[
\mathfrak{p}_{1,1}^{2k-1}\mathfrak{p}_{2,1}^{-(2k-1)}=(1).
\]

Taking the class of the left hand side of this expression, we have 
\[
[\mathfrak{p}_{1,1}^{2k-1}\mathfrak{p}_{2,1}^{-(2k-1)}]=C^{2k}[\mathfrak{p}_{1,1}\mathfrak{p}_{2,1}^{-1}]=[\mathfrak{p}_{1,1}\mathfrak{p}_{2,1}^{-1}].
\]

Therefore, 
\[
[\mathfrak{p}_{1,1}\mathfrak{p}_{2,1}^{-1}]=1
\]
and, as we can assume $mn > 3$, 
\[
\mathfrak{p}_{1,1}=\mathfrak{p}_{2,1}.
\]
Naturally, define $\mathfrak{p}:=\mathfrak{p}_{1,1}=\mathfrak{p}_{2,1}$.

Recall $\beta_i = mx_i + y_i\sqrt{-mn}$. Since $\beta_i \in \mathfrak{p}_{i,1}$, we have:

\[
mx_i + y_i\sqrt{-mn} \equiv 0 \pmod{\mathfrak{p}}.
\]
This implies a congruence relation for $\sqrt{-mn}$ in the residue field $\mathcal{O}_K/\mathfrak{p} \cong \mathbb{F}_p$. Note that $y_i \not\equiv 0 \pmod p$. We can rearrange to solve for $\sqrt{-mn}$:
\[
\sqrt{-mn} \equiv -m x_i y_i^{-1} \pmod{\mathfrak{p}}.
\]
Since $\mathfrak{p}$ is fixed, the value of $\sqrt{-mn} \mod{\mathfrak{a}}$ is unique. Therefore:
\[
-m x_1 y_1^{-1} \equiv -m x_2 y_2^{-1} \pmod{p}.
\]
Since $\gcd(m,p)=1$, we can cancel $-m$ and cross-multiply:
\[
x_1 y_2 \equiv x_2 y_1 \pmod{p}.
\]
Thus, $p$ divides $D:=x_1 y_2 - x_2 y_1$.

The vectors $v_1,v_2$ defined by
\[
v_i:=\begin{bmatrix}
    x_i\sqrt{m}\\
    y_i\sqrt{n}
\end{bmatrix}
\]
lie on the circle of radius $\sqrt{p}$.

Notice that $D$ can be interpreted as the determinant of the matrix in $\frac{1}{\sqrt{mn}}[v_1,v_2]\in M_{2\times 2}(\mathbb{R})$, that is
\[
D=x_1y_2-x_2y_1=\dfrac{1}{\sqrt{mn}}\det\left(\begin{bmatrix}
    x_1\sqrt{m} & x_1\sqrt{m}\\
    y_1\sqrt{m} & y_2\sqrt{m}
\end{bmatrix}\right).
\] 

This allows us to bound $|D|$ as 
\[
|D|=\dfrac{p}{\sqrt{mn}}|\sin(\theta)|\leq \dfrac{p}{\sqrt{mn}},
\]
there $\theta$ is the angle between $v_1,v_2$. By assumption, $mn>1$ and $D \equiv 0 \pmod p$, we have $|D| < p$ meaning $D = 0$ so,
\[
x_1 y_2 = x_2 y_1.
\]
Since the pairs $(x_i, y_i)$ must be primitive, 
\[
(x_1, y_1) = \pm (x_2, y_2).
\]
\end{proof}

We can now prove Theorem~\ref{Q}.  Let $K:=\mathbb{Q}(\sqrt{-mn})$ with $\mathcal{O}_K$ its associated ring of integers and define the ambiguous binary quadratic form $Q$ as
\[
Q(x,y):=mx^2+ny^2.
\]

By Weber's Theorem and Lemma \ref{prime uniqueness} there exists some prime $p$ represented uniquely and primitively by $Q$. Therefore, Lemma \ref{pair ideal correspondence} there exists some ideal
\[
\mathfrak{p}\subseteq\mathcal{O}_K
\]
in the ideal class $C:=[(m,\sqrt{-mn})]\in\text{CL}(K)$. Further, this ideal $\mathfrak{p}$ has absolute norm
\[
N(\mathfrak{p})=p,
\]
implying $\mathfrak{p}$ is prime as an ideal in the Dedekind domain $\mathcal{O}_K$.

From \cite{Gauss} \S 249, as $Q$ is an ambiguous form, its corresponding ideal class $C:=[(m,\sqrt{-mn})]$ is ambiguous in the sense that 
\[
C^2=1,
\]
the primitive ideal class. Therefore the ideal class of the conjugate ideal $\bar{\mathfrak{p}}$ must also be $C$ as 
\[
[\mathfrak{p}][\bar{\mathfrak{p}}]=[\mathfrak{p}\bar{\mathfrak{p}}]=[(p)]=1.
\]

Let $\mathfrak{a}$ be an ideal in $C$ such that $N(\mathfrak{a})=p^{2k-1}$, where $k$ is as in the theorem. It follows that 
\[
\mathfrak{a}\bar{\mathfrak{a}}=(p^{2k-1})=(p)^{2k-1}=(\mathfrak{p}\bar{\mathfrak{p}})^{2k-1}=\mathfrak{p}^{2k-1}\bar{\mathfrak{p}}^{2k-1}.
\]
As $\mathcal{O}_K$ is a Dedekind domain, we therefore have 
\[
\mathfrak{a}=\mathfrak{p}^j\mathfrak{p}^{2k-1-j}
\]
for some $j\in\{0,\dots,2k-1\}$. Note that the norm of $\mathfrak{a}$ is invariant under this factorization.

Therefore, the ideal corresponding to any representation of $p^{2k-1}$ by $Q$ must be of the above form.

Further, note that conjugation of some $\mathfrak{a}_j$ gives, 
\[
\bar{\mathfrak{a}_j}=\overline{\mathfrak{p}^j\bar{\mathfrak{p}}^{2k-1-j}}=\mathfrak{p}^{2k-1-j}\bar{\mathfrak{p}}^{j}.
\]
Therefore, there are only $k$ choices for $j$ that lead to potentially distinct values of $\mathfrak{a}$. Indeed, distinct choices of $j$ correspond to distinct solutions to $p^{2k-1}=mx^2+ny^2$ by Lemma \ref{pair ideal correspondence}.  This concludes the proof of
Theorem~\ref{Q}.

Theorem~\ref{M} is then a direct consequence of Theorem~\ref{Q}. It follows from \eqref{eq:mult-r} that the multiplicity of the eigenvalue $\lambda$ is the number of lattice points in the first quadrant that are on the ellipse:
$$
\frac{\lambda}{\pi^2}=\frac{x^2}{a^2}+\frac{y^2}{b^2}.
$$
Applying our assumption that $(a/b)^2=n/m$ for some $m,n\in\mathbb{N}$ coprime, the ellipse equation becomes
\[\dfrac{\lambda}{\pi^2}=\dfrac{mx^2}{b^2n}+\dfrac{y^2}{b^2}.
\] 
Clearing the denominators, we have 
\[
\dfrac{b^2n\lambda}{\pi^2}=mx^2+ny^2.
\] 
As $\lambda$ is allowed to vary in $\mathbb{R}$, we may choose 
\[
\lambda=p^{2k-1}\cdot\dfrac{\pi^2}{b^2n},
\]
where $p$ is the prime guaranteed by Theorem \ref{Q} and $k$ is the desired multiplicity.

Note that the assumption that $mn>3$ is as required in Theorem \ref{Q} is safe, as the circular case received a thorough treatment in \cite{Fu2025-ph}.\qed

\section{Multiplicities on Tori}\label{s:tori}

We provide a proof of Theorems~\ref{Q2} and~\ref{N} in this section. 

\begin{lemma}\label{Ambiguous_Ideal}
    Every ambiguous ideal class in the ideal class group $C(\mathcal{O})$ contains an ideal invariant under conjugation.
\end{lemma}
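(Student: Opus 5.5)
Every ambiguous class contains an ambiguous form, and from a suitably normalized ambiguous form we will read off a conjugation-invariant ideal. By the isomorphism $C(\Delta)\cong C(\mathcal{O})$ (Theorem 7.7 of \cite{Cox}), an ambiguous ideal class corresponds to a form class of order $\le 2$. So let $Q=ax^2+bxy+cy^2$ be a primitive form in that class with $Q$ properly equivalent to $ax^2-bxy+cy^2$.

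The first step is to replace $Q$ by a reduced form in its class. For reduced forms, $|b|\le a\le c$, and proper equivalence of $(a,b,c)$ with $(a,-b,c)$ forces one of three cases, by uniqueness of reduced representatives up to the boundary conventions: $b=0$, or $b=a$, or $a=c$. In the case $a=c$, the substitution $(x,y)\mapsto(x,x+y)$ converts the form to one with $b'=a'$ (namely $(a,2a+b,\ldots)$ suitably reduced, or $(a,b,a)\sim(a,b-2a,\ldots)$). Thus we may assume $b=0$ or $b=a$, i.e.\ $a\mid b$.

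Next we write down the ideal attached to $Q$ under the Cox correspondence, $\mathfrak{a}=\bigl(a,\ \tfrac{-b+\sqrt{\Delta}}{2}\bigr)=a\mathbb{Z}+\tfrac{-b+\sqrt{\Delta}}{2}\mathbb{Z}$, which is a proper $\mathcal{O}$-ideal in the class corresponding to $Q$. Its conjugate is $\bar{\mathfrak{a}}=a\mathbb{Z}+\tfrac{-b-\sqrt{\Delta}}{2}\mathbb{Z}$. The difference of the two generators is $\tfrac{-b+\sqrt{\Delta}}{2}-\tfrac{-b-\sqrt{\Delta}}{2}=\sqrt{\Delta}$, or equivalently $\tfrac{-b-\sqrt\Delta}{2}=-\tfrac{-b+\sqrt\Delta}{2}-b$. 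Since $a\mid b$, we have $-b\in a\mathbb{Z}$, so $\tfrac{-b-\sqrt{\Delta}}{2}\in\mathfrak{a}$. By symmetry $\tfrac{-b+\sqrt\Delta}{2}\in\bar{\mathfrak{a}}$, and therefore $\mathfrak{a}=\bar{\mathfrak{a}}$. If the class correspondence is stated via the map $[\mathfrak a]\mapsto$ form (rather than form $\mapsto$ ideal), it changes nothing, since the class of $\mathfrak{a}$ is the given ambiguous class.

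The main obstacle is the reduction step: we must justify carefully that an ambiguous reduced form has $b=0$, $a=b$, or $a=c$. This uses uniqueness of reduced forms, which holds for positive definite forms (\cite{Cox}, Theorem 2.8). The reduced form of $(a,-b,c)$ is itself unless $b=\pm a$ or $a=c$ with $b<0$ conventions intervene. It follows that $(a,b,c)=(a,-b,c)$ as reduced forms, giving $b=0$, or else we are in one of the boundary cases $|b|=a$ or $a=c$. We then handle the boundary case $a=c$ by the explicit change of variables $(x,y)\mapsto(x,x+y)$, which gives the form $(a, b+2a, \ldots)$ with middle coefficient divisible by… not obviously by $a$. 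So instead we use the alternative generator $a\mathbb{Z}+\tfrac{-b+\sqrt\Delta}{2}\mathbb{Z}$ when $a=c$: there $\mathfrak{a}$ times a suitable element is related to $\bar{\mathfrak a}$, and we would need a different invariant representative. Concretely, we would try the ideal attached to the equivalent form $(a+b+c,\,b+2c,\,c)=(2a+b,\,b+2a,\,a)$, whose first coefficient divides its middle coefficient, which again gives a conjugation-invariant ideal by the computation above.
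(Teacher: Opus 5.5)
Your argument is correct, but it takes a genuinely different route from the paper.

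The paper works entirely on the ideal side. For any $\mathfrak a\in C$, ambiguity gives $[\bar{\mathfrak a}]=[\mathfrak a]^{-1}=[\mathfrak a]$, so $\mathfrak a=\bar{\mathfrak a}\,\alpha\mathcal O$. Conjugating and substituting forces $\alpha\bar\alpha$ to be a positive rational unit, hence $1$. Hilbert's Theorem 90 then writes $\alpha=\beta/\bar\beta$, and $\mathfrak a\,\bar\beta\mathcal O$ is conjugation-invariant.

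You instead pass to forms. Uniqueness of reduced forms (Cox, Theorem 2.8), together with $[(a,-b,c)]=[(a,b,c)]^{-1}$, gives Gauss's classification of reduced ambiguous forms: $b=0$, $b=a$, or $a=c$ with $b>0$. You move the last case to $(2a+b,2a+b,a)$ by $(x,y)\mapsto(x,x+y)$. Finally, $a\mid b$ makes $[a,\tau]$ with $\tau=(-b+\sqrt\Delta)/2$ invariant, because $\bar\tau=-b-\tau$.

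The trade-offs are as follows. The paper's argument is shorter, needs no reduction theory, and never uses the explicit shape of the map in Cox, Theorem 7.7. Yours is more hands-on and produces an explicit invariant representative of norm $a$, with $a\mid\Delta$. The price is that it leans on positive-definite reduction theory and on the explicit form-to-ideal map, including the fact that properly equivalent forms land in the same ideal class.

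When you write it up, drop the false starts. The substitution $(x,y)\mapsto(x,x+y)$ gives $Q(x,x+y)=(a+b+c)x^2+(b+2c)xy+cy^2$, which for $a=c$ is already $(2a+b,\,2a+b,\,a)$. The form $(a,\,2a+b,\dots)$ you first wrote down comes from $(x,y)\mapsto(x+y,y)$ instead. State the three cases cleanly and then give the one-line lattice computation.
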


\begin{proof}
    Let $C\in C(\mathcal{O})$ be an ambiguous ideal class. Let $\mathfrak{a}\in C$. By definition, there exists some $\alpha \in K^\times$ such that 
    \begin{equation}\label{eq:l1}
        \mathfrak{a}=\mathfrak{\bar{a}}\cdot \alpha\mathcal{O}.
    \end{equation}
    Conjugating equation \eqref{eq:l1}, substituting, and simplifying, we have 
    \[
        \mathcal{O}=\alpha\bar{\alpha}\mathcal{O},
    \]
    from which it follows that 
    \[
        \alpha\bar{\alpha}=u,
    \]
    for some unit $u$ of $\mathcal{O}$. Specifically, since $\alpha\bar{\alpha}=y_{K/\mathbb{Q}}(\alpha)\in\mathbb{Q}_{\geq 0}$ and $\Delta<0$, the only such choice is
    \begin{equation}\label{eq:l2}
        \alpha\bar{\alpha}=1.
    \end{equation}
    
    Applying Hilbert's Theorem 90 to equation \eqref{eq:l2}, we have \[\alpha=\beta/\bar{\beta}=b\beta/\overline{b\beta}\] for some $\beta\in K^\times$ and $b\in\mathbb{Z}$ such that $b\beta\in\mathcal{O}_K$, $b\beta\mathcal{O}$ is a principal fractional $\mathcal{O}$-ideal. Substituting this result back into equation \eqref{eq:l1}, we can define the sought ideal, \[\mathfrak{b}:=\mathfrak{a}\cdot\overline{b\beta}\mathcal{O}=\mathfrak{\bar{a}}\cdot b\beta\mathcal{O}\in C,\] which is in the appropriate class and is fixed under conjugation.
\end{proof}

\begin{lemma}\label{Stabilizers}
    Consider the group $\textup{Aut}^+(Q)$. For any point $\mathbf{x}\in\mathbb{Z}^2\setminus(0,0)$, \[\textup{Stab}(\mathbf{x})=\{I\}.\]
\end{lemma}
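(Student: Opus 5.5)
The idea is that a proper automorphism fixing a nonzero vector must be unipotent, and a positive-definite form admits no nontrivial unipotent automorphisms. Let $M=\begin{pmatrix} p&q\\ r&s\end{pmatrix}\in\textup{Aut}^+(Q)$ and suppose $M\mathbf{x}=\mathbf{x}$ for some $\mathbf{x}\in\mathbb{Z}^2\setminus\{(0,0)\}$. We will show $M=I$.

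\textbf{Step 1: reduce to a unipotent matrix.} Since $M$ fixes $\mathbf{x}\neq 0$, it has eigenvalue $1$. Because $\det M=1$, the other eigenvalue is also $1$, so $M$ is either $I$ or conjugate to a nontrivial unipotent matrix. Write $\mathbf{x}=g\mathbf{x}_0$ with $\mathbf{x}_0=(d,0)^T$, where $d=\gcd(\mathbf{x})$, for some $g\in\text{SL}_2(\mathbb{Z})$. This is possible because every primitive vector extends to a basis of $\mathbb{Z}^2$ of determinant $1$. Replace $Q$ by the properly equivalent form $Q'=Q\circ g$ and $M$ by $M'=g^{-1}Mg$. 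Then $M'\in\textup{Aut}^+(Q')$, $Q'$ is still positive definite, and $M'$ fixes $(1,0)^T$. So
\[
M'=\begin{pmatrix}1&t\\0&1\end{pmatrix},\qquad t\in\mathbb{Z}.
\]

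\textbf{Step 2: show $t=0$.} Write $Q'(x,y)=a'x^2+b'xy+c'y^2$. Invariance gives $Q'(x+ty,y)=Q'(x,y)$ for all $x,y$. Put $(x,y)=(0,1)$ and iterate, using that $(M')^n$ is also an automorphism:
\[
Q'(nt,1)=Q'(0,1)=c'\quad\text{for all } n\in\mathbb{Z}.
\]
Now $Q'(nt,1)=a'n^2t^2+b'nt+c'$ and $a'>0$ by positive definiteness. If $t\neq 0$, the left side tends to infinity as $n\to\infty$, a contradiction. Hence $t=0$, so $M'=I$ and therefore $M=I$.

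A more geometric alternative is that $\textup{Aut}(Q)$ is finite, since it preserves the ellipses $Q=\text{const}$. So $M$ has finite order, and a finite-order matrix with both eigenvalues $1$ must be diagonalizable, hence equal to $I$.

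The only point needing care is Step 1: completing $\mathbf{x}/d$ to an $\text{SL}_2(\mathbb{Z})$ basis, and checking that conjugation preserves proper automorphisms and positive definiteness. Both are routine, so I expect no real obstacle. The improper elements, where $\det M=-1$, are exactly the case the statement excludes, since reflections can fix vectors.
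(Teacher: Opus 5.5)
Your proof is correct, but it takes a different route from the paper's.

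\textbf{The paper's route.} The paper lists $\textup{Aut}^+(Q)$ by discriminant and reads off the answer: $\pm I$ for $\Delta<-4$, and the four (resp. six) matrices shown for $\Delta=-4$ (resp. $-3$). One then sees that no element other than $I$ has eigenvalue $1$. This relies on the standard classification of proper automorphisms of positive-definite forms, equivalently the units of $\mathcal{O}$. It also tacitly relies on conjugating to the reduced forms $x^2+y^2$ and $x^2+xy+y^2$, because the listed matrices are automorphisms of those particular forms, not of every form of discriminant $-4$ or $-3$.

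\textbf{Your route.} You move the fixed vector to $(d,0)^T$ by an $\textup{SL}_2(\mathbb{Z})$ change of basis. This forces the conjugated automorphism to be $\begin{pmatrix}1&t\\0&1\end{pmatrix}$. You then eliminate $t$ using $a'=Q(\mathbf{x})/d^2>0$. The iteration is not even needed: comparing $xy$-coefficients in $Q'(x+ty,y)=Q'(x,y)$ gives $2a't+b'=b'$, so $t=0$ at once.

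\textbf{What each buys.} Your argument is self-contained and uniform in $\Delta$. It needs no classification, reduction theory or case analysis. With the coefficient comparison it uses only $Q(\mathbf{x})\neq 0$, so it holds for any binary form at any vector it does not annihilate. What it does not give is the order $|\textup{Aut}^+(Q)|\in\{2,4,6\}$. The paper's table supplies that at the same time, and it is exactly what the orbit--stabilizer count in Corollary~\ref{Raw_Counts} needs. With your proof, that count would still have to be obtained separately.
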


\begin{proof}
    This is a straightforward computation, considering the following table:
    
    \begin{center}
        \begin{tabular}{c|c}
        $\Delta$ & Elements of $\text{Aut}^+(Q)$ \\\hline
        $<-4$ & $\pm I$\\
        $-4$ & $\pm I,\pm \big(\begin{smallmatrix} 0&-1\\1&0\end{smallmatrix}\big)$\\
        $-3$ & $\pm I,\pm \big(\begin{smallmatrix} 0&-1\\1&1\end{smallmatrix}\big),\pm \big(\begin{smallmatrix} 1&1\\-1&0\end{smallmatrix}\big)$
        \end{tabular}
    \end{center}
\end{proof}

In what follows, We will prove Theorem~\ref{Q2} and the following corollary. We will then 
deduce Theorem~\ref{N} as a consequence of the corollary.  

\begin{corollary}\label{Raw_Counts}
    For the form $Q$, the representation counting function is as follows:
    \[R_Q(\mathbb{Z}_{\geq 0})=\begin{cases}
        \{1\}\cup 2\mathbb{Z}_{\geq 0}&\text{if}\;\Delta<-4\\
        \{1\}\cup 4\mathbb{Z}_{\geq 0}&\text{if}\;\Delta=-4\\
        \{1\}\cup 6\mathbb{Z}_{\geq 0}&\text{if}\;\Delta=-3\\
    \end{cases}.\]
\end{corollary}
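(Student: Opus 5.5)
The plan is to deduce the corollary from Theorem~\ref{Q2} together with Lemma~\ref{Stabilizers}, using an orbit-counting argument. I would first treat $n=0$ on its own. Since $Q$ is positive-definite, the only solution of $Q(x,y)=0$ is $(x,y)=(0,0)$. Hence $R_Q(0)=1$, which accounts for the isolated value $\{1\}$ in each case.

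For $n\geq 1$, let $S_n=\{\mathbf{x}\in\mathbb{Z}^2 : Q(\mathbf{x})=n\}$, so $S_n\subseteq \mathbb{Z}^2\setminus\{(0,0)\}$. The group $\textup{Aut}^+(Q)$ acts on $S_n$, because each automorphism preserves the value of $Q$. By definition, $r_Q^+(n)$ is the number of orbits of this action. By Lemma~\ref{Stabilizers}, every stabilizer of a nonzero lattice point is trivial. The orbit--stabilizer theorem then shows every orbit has exactly $w:=|\textup{Aut}^+(Q)|$ elements, so
\[
R_Q(n)=w\cdot r_Q^+(n)\qquad (n\geq 1).
\]
Note that this holds for all representations, primitive or not, since Lemma~\ref{Stabilizers} applies to every nonzero point. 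Reading off the table in the proof of Lemma~\ref{Stabilizers}, we get $w=2$ when $\Delta<-4$, $w=4$ when $\Delta=-4$, and $w=6$ when $\Delta=-3$.

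Next I would invoke Theorem~\ref{Q2}: $r_Q^+$ is surjective onto $\mathbb{Z}_{\geq 0}$. I need this surjectivity restricted to $n\geq 1$. The value $r_Q^+(0)=1$ does not matter here. The values $k\geq 1$ can only be attained at some $n\geq1$, since $r^+_Q(0)=1$ and $r_Q^+(n)\ge 1$ requires a representation. The value $0$ is attained at some $n\geq 1$ because a positive-definite form does not represent every positive integer. To see this, note that the number of $n\le X$ represented is at most the number of lattice points in the ellipse $Q\le X$, which is $O(X)$; a sharper argument is needed, so instead I would simply cite the $k=0$ case of Theorem~\ref{Q2} and check that the witness produced there is positive. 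Combining the pieces gives $R_Q(\mathbb{Z}_{\geq1})=w\mathbb{Z}_{\geq0}$, and adding $R_Q(0)=1$ yields the three cases stated.

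The main obstacle is bookkeeping rather than substance. One must make sure that the definition of $r_Q^+$ really is the orbit count under $\textup{Aut}^+(Q)$ on all of $S_n$, so that the exact relation $R_Q=w\,r_Q^+$ holds. One must also confirm that the value $0$ of $r^+_Q$ is attained at some $n\ge1$. If the latter is not clear from Theorem~\ref{Q2}, I would check it directly via a congruence obstruction. For instance, pick a prime $q\nmid\Delta$ with $\left(\frac{\Delta}{q}\right)=-1$. Then any $n$ with $q\,\|\,n$ cannot be represented by $Q$, because for such $q$ the relation $q\mid Q(x,y)$ forces $q\mid x$ and $q\mid y$, and hence $q^2\mid n$.
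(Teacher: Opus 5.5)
Your proposal is correct and is essentially the paper's own argument: orbit--stabilizer together with Lemma~\ref{Stabilizers} gives $R_Q(n)=|\mathrm{Aut}^+(Q)|\,r_Q^+(n)$ for $n\ge 1$, and Theorem~\ref{Q2} plus $R_Q(0)=1$ then yields the three cases, and you are more careful than the paper about which values occur at positive $n$. One small fix: bare surjectivity of $r_Q^+$ does not show that the value $1$ occurs at some $n\ge 1$ (since $r_Q^+(0)=1$, it could a priori occur only at $n=0$), but the $\alpha=0$ case of the construction in the proof of Theorem~\ref{Q2} supplies such an $n$ (namely $n=p$ or $n=a$); for the value $0$, either your congruence obstruction or simply the statement of Theorem~\ref{Q2} together with $r_Q^+(0)=1\neq 0$ suffices.
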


\subsection{Proof of Theorem \ref{Q2}, Non-Ambiguous Forms}\label{Q_Non_Ambiguous}

\begin{proof}
Choose $p$ and $q$ prime, such that $p,q$ are represented by $Q$ and the principal form of $\Delta$, respectively, where both are coprime to $f$. Such primes are guaranteed to exist as a consequence of Weber's theorem on the density of primes in the image of certain quadratic forms (accessible treatments of which can be found in \cite{Bri54,SW92}).

By Theorem 7.7 of \cite{Cox}, there exist $\mathcal{O}$-ideals $\mathfrak{p}\in C_Q$ and $\mathfrak{q}\in C_1$ with norms $|\mathcal{O}/\mathfrak{p}|=p$ and $|\mathcal{O}/\mathfrak{q}|=q$, respectively. In terms of the conjugate ideals, these norm relationships can be expressed \begin{equation}\label{eq:1}
    \begin{cases}
        \mathfrak{p\bar{p}}=p\mathcal{O}\\
        \mathfrak{q\bar{q}}=q\mathcal{O}
    \end{cases}.
\end{equation}
Furthermore, $\mathfrak{p},\mathfrak{\bar{p}},\mathfrak{q},\mathfrak{\bar{q}}$ are prime because $p,q\nmid f$.

Fix $\alpha\geq 0$. We now show that there exists some $\mathcal{O}$-ideal in $C_Q$ with norm $pq^{\alpha}$. Specifically for $\beta\in\{0,\dots,\alpha\}$, the ideal $\mathfrak{p}\mathfrak{q}^\beta\bar{\mathfrak{q}}^{\alpha-\beta}$ has norm $|\mathcal{O}/\mathfrak{p}\mathfrak{q}^\beta\bar{\mathfrak{q}}^{\alpha-\beta}|=pq^\alpha$ and $\mathfrak{p}\mathfrak{q}^\beta\bar{\mathfrak{q}}^{\alpha-\beta}\in C_Q$. For the norm, we check \[N(\mathfrak{p}\mathfrak{q}^\beta\bar{\mathfrak{q}}^{\alpha-\beta})=N(\mathfrak{p})N(\mathfrak{q})^\beta N(\bar{\mathfrak{q}})^{\alpha-\beta}=pq^{\beta}q^{\alpha-\beta}=pq^{\alpha},\] and for class membership, we check \[[\mathfrak{p}\mathfrak{q}^{\beta}\mathfrak{\bar{q}}^{\alpha-\beta}]=[\mathfrak{p}][\mathfrak{q}]^\beta[\mathfrak{\bar{q}}]^{\alpha-\beta}=[\mathfrak{p}]=C_Q.\]

Now let $\mathfrak{a}\in C_Q$ be an $\mathcal{O}$-ideal with norm $|\mathcal{O}/\mathfrak{a}|=pq^{\alpha}$. That is,
\begin{equation}\label{eq:2}
    \mathfrak{a\bar{a}}=pq^\alpha\mathcal{O}.
\end{equation}
From equations \ref{eq:1} and \ref{eq:2}, we have
\begin{equation}\label{eq:3}
    \mathfrak{a\bar{a}}=p\mathcal{O}(q\mathcal{O})^\alpha=\mathfrak{p\bar{p}}\mathfrak{q}^\alpha\mathfrak{\bar{q}}^\alpha.
\end{equation}
Thus, as $\mathfrak{a}$ has norm $pq^{\alpha}$ and $(pq^{\alpha}, f)=1$, $\mathfrak{a}$ factors uniquely into prime $\mathcal{O}$-ideals, up to order. Therefore, from equation \ref{eq:3}, we have 
\begin{equation}\label{eq:4}
    \mathfrak{a}=\mathfrak{p}^{\gamma}\mathfrak{\bar{p}}^{1-\gamma}\mathfrak{q}^{\beta}\mathfrak{\bar{q}}^{\alpha-\beta}\quad\text{for some $(\gamma,\beta)\in\{0,1\}\times\{0,\dots,\alpha\}$},
\end{equation}
noting that the norm of $\mathfrak{a}$ is invariant under different factorizations.

Since $Q$ is non-ambiguous, $[\mathfrak{p}]\neq [\mathfrak{\bar{p}}]$ and we must choose $\gamma=1$ to ensure $\mathfrak{a}\in C_Q$. Therefore, only $\alpha+1$ choices of $(\gamma,\beta)$ are valid and thus there are $\alpha+1$ $\mathcal{O}$-ideals in $C_Q$ with norm $pq^\alpha$.

By the ideal--representation correspondence (Lemma 2.1 of \cite{BG06}), there are $\alpha+1$ distinct representations of $pq^{\alpha}$, up to proper automorphism. Letting $\alpha$ vary, we see that $r_Q^+$ is surjective.
\end{proof}

\subsection{Proof of Theorem \ref{Q2}, Ambiguous Forms}\label{Q_Ambiguous}

\begin{proof}
    Choose $q$ as in section \ref{Q_Non_Ambiguous}. Let $\mathfrak{a}$ be an ideal in $C_Q$ fixed by conjugation, as guaranteed by Lemma \ref{Ambiguous_Ideal} and define $a=|\mathcal{O}/\mathfrak{a}|$.

    Fix $\alpha\geq 0$. We now show that there exists some $\mathcal{O}$-ideal in $C_Q$ with norm $aq^{\alpha}$. Specifically, for $\beta\in\{0,\dots,\alpha\}$, the ideal $\mathfrak{a}\mathfrak{q}^\beta\bar{\mathfrak{q}}^{\alpha-\beta}$ has norm $|\mathcal{O}/\mathfrak{a}\mathfrak{q}^\beta\bar{\mathfrak{q}}^{\alpha-\beta}|=aq^\alpha$ and $\mathfrak{a}\mathfrak{q}^\beta\bar{\mathfrak{q}}^{\alpha-\beta}\in C_Q$. Checking these properties follows a similar argument to that in section \ref{Q_Non_Ambiguous}.

    Now let $\mathfrak{b}\in C_Q$ be an $\mathcal{O}$-ideal with norm $|\mathcal{O}/\mathfrak{b}|=aq^{\alpha}$. As $\mathfrak{a}$ and $\mathfrak{b}$ are in the same ideal class, there exists some $\gamma\in K^\times$ such that
    \begin{equation}\label{eq:7}
        \mathfrak{b}\mathfrak{a}^{-1}=\gamma\mathcal{O}.
    \end{equation}
    Conjugating both sides of equation \ref{eq:7} and multiplying gives 
    \[
        \gamma\bar{\gamma}\mathcal{O}=\mathfrak{a}^{-1}\mathfrak{\bar{a}}^{-1}\mathfrak{b}\mathfrak{\bar{b}}=(a\mathcal{O})^{-1}(aq^\alpha\mathcal{O})=(q\mathcal{O})^\alpha.
    \]
    Recalling $q\nmid f$, we further have
    \begin{equation}\label{eq:8}
        \gamma\mathcal{O}=\mathfrak{q}^{\beta}\mathfrak{\bar{q}}^{\alpha-
        \beta},
    \end{equation}
    for $\beta\in\{0,\dots, \alpha\}$.

    Thus $\mathfrak{b}=\mathfrak{a}\mathfrak{q}^{\beta}\mathfrak{\bar{q}}^{\alpha-\beta}$, and each one of the $\alpha+1$ choices for $\beta$ yields a unique ideal $\mathfrak{b}$ via unique factorization (its worth noting that $\mathfrak{a}$ may not be prime, but this does not pose an issue).

    By the ideal--representation correspondence (Lemma 2.1 of \cite{BG06}), there are $\alpha+1$ distinct representations of $aq^{\alpha}$, up to proper automorphism. Letting $\alpha$ vary, we see that $r_Q^+$ is surjective.
\end{proof}

\begin{proof}[Proof of Corollary~\ref{Raw_Counts}]
    Let $n>0$ and define $X_Q(n)$ to be the set of representations of $n$ by $Q$. Further, consider the action of $\text{Aut}^+(Q)$ on $X_Q(n)$. By the Orbit-Stabilizer Theorem and Lemma \ref{Stabilizers}, for any $\mathbf{x}\in X_Q(n)$, 
    \[|\text{Orb}(\mathbf{x})|=\dfrac{|\text{Aut}^+(Q)|}{|\text{Stab}(\mathbf{x})|}=\begin{cases}
        2&\text{if}\;\Delta<-4\\
        4&\text{if}\;\Delta=-4\\
        6&\text{if}\;\Delta=-3
    \end{cases}.\]
    Considering that $|\text{Stab}((0,0))|=\text{Aut}^+(Q)$, the claimed values of $R_Q(\mathbb{Z}_{\geq 0})$ follow.
\end{proof}

\subsection{Proof of Theorem~\ref{N}}

Proving the rational cases of Theorem~\ref{N} now amounts to applying the above Corollary~\ref{Raw_Counts} to the forms determined by the listed values of $r,\theta$.    Recall that $f$ is a primitive positive-definite binary quadratic form
    \[
    f(x,y)=\frac{1}{\tau}(\beta\delta x^2 - \alpha\delta xy + \gamma\beta y^2)
    \]
    with discriminant \[\Delta = \frac{1}{\tau^2}\left(\alpha^2\delta^2-4\beta^2\gamma\delta\right),\] where $\alpha/\beta=2r\cos\theta$, $\gamma/\delta=r^2$ with $(\alpha,\beta)$ and $(\gamma,\delta)$  pairwise coprime, and $\tau=\gcd(\beta, \delta)$.

    In terms of $r,\theta$, the discriminant can be expressed 
    \[
    \Delta = -4\left(\dfrac{r\cdot\beta\delta}{\gcd(\beta,\delta)}\right)^2 \sin^2\theta.
    \]

    It follows from Lemma~\ref{lm:2} that if $\mathcal{M}(T)$ is infinite, then both $r^2$ and $r\cos\theta$ are rational. 
    Suppose now that $r^2$ and $r\cos\theta$ are rational. Further suppose that $\Delta = -3$. By Corollary \ref{Raw_Counts}, proved above, it follows that 
    \[
    R_f(\mathbb{Z})=\{1\}\cup 6\mathbb{Z}_{\geq 0}
    \] and therefore $\mathcal{M}(T)=6\mathbb{N}.$

    Conversely, suppose that $\mathcal{M}(T)=6\mathbb{N}$. By Lemma \ref{lm:2}, it follows that $r^2$ and $r\cos\theta$ are both rational. Thus $f$ is well-defined. 
    \[
    R_f(\mathbb{Z})=\{1\}\cup 6\mathbb{Z}_{\geq 0}.
    \]
    Recalling that for any such form $f$, $\Delta\leq -3$, it follows that $ \Delta = -3.$

    Similar arguments suffice for the cases when $\Delta = -4$ and $\Delta < -4$. This concludes the
    proof of the first part of Theorem~\ref{N}.

    We now prove the irrational part of Theorem~\ref{N}. We will divide the proof into several lemmas. Write
    $$
    \tilde f(x, y)=x^2-2r\cos\theta xy +r^2y^2=x^2+bxy+cy^2.
    $$
    We then have $b^2-4c<0$, and at least one of $b$ or $c$ is irrational.  For any positive real number $z$, write  $R_{\tilde{f}}(z)$ the number of integer solution pairs $(x, y)$ for $z=\tilde f(x, y)$.  Note that because of the symmetry $\tilde f(x, y)=\tilde f(-x, -y)$, $R_{\tilde{f}}(z)$ is always an even number. Since $\{\tilde f(x, y) \mid x, y\in \mathbb{Z}\}$ is a countable set, there exists a positive real number $z$ such that $z=\tilde f(x, y)$ has no integer solutions. Moreover, by Lemma~\ref{lm:2}, $R_{\tilde{f}}(z)\le 4$. Let $R_{\tilde{f}}(\mathbb{R^{+}})=\{R_{\tilde{f}}(z) \mid z\in\mathbb{R}, z>0\}$.

    \begin{lemma}\label{lm:a} If $b\in\mathbb{Q}$ and $c\not\in\mathbb{Q}$, then $R_{\tilde{f}}(\mathbb{R^{+}})=\{0, 2, 4\}$. 
    \end{lemma}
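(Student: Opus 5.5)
We must show that $R_{\tilde f}(z)$ takes each of the values $0$, $2$, $4$ and no others. By the discussion preceding the lemma, $R_{\tilde f}(z)$ is always even, the value $0$ occurs, and Lemma~\ref{lm:2} gives $R_{\tilde f}(z)\le 4$. It therefore remains to exhibit a $z$ with $R_{\tilde f}(z)=2$ and a $z$ with $R_{\tilde f}(z)=4$.

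\emph{Value $4$.} Since $b\in\mathbb{Q}$, write $b=-u/v$ with $u,v\in\mathbb{Z}$ and $v>0$. The key observation is that the two points $(x,y)$ and $(x',y)$ with $x+x'=-by$ satisfy
\[
\tilde f(x,y)-\tilde f(x',y)=(x-x')(x+x'+by)=0 .
\]
This is the reflection $x\mapsto -x-by$ in the $x$-coordinate. Take $y=v$; then $-by=u$ is an integer. Choose an integer $x$ with $2x\neq u$, so that $x':=u-x\neq x$, and set $z=\tilde f(x,v)$. The four points $\pm(x,v)$ and $\pm(x',v)$ are distinct and all lie on $\tilde f=z$, so $R_{\tilde f}(z)\ge 4$. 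Combined with $R_{\tilde f}(z)\le 4$, we get $R_{\tilde f}(z)=4$.

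\emph{Value $2$.} Take $z=\tilde f(1,0)=1$. Suppose $\tilde f(x,y)=1$ with $y\neq 0$. Then
\[
c\,y^2=1-x^2-bxy\in\mathbb{Q},
\]
which forces $c\in\mathbb{Q}$, a contradiction. Hence every solution has $y=0$, so $x^2=1$, and the only solutions are $(\pm1,0)$. Thus $R_{\tilde f}(1)=2$.

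The same irrationality argument is the underlying reason the value $4$ is sharp. Two solutions with $y$-values of different absolute value would give $c(y_1^2-y_2^2)\in\mathbb{Q}$, which is impossible. So all solutions share the same $|y|$, and for a fixed $y$ the equation is quadratic in $x$, giving at most two values of $x$. If one wants to avoid relying on Lemma~\ref{lm:2}, this yields the bound $R_{\tilde f}(z)\le 4$ directly.

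The only step needing care is ensuring that the four points in the value-$4$ construction are genuinely distinct. Taking $y=v\neq 0$ gives $(x,v)\neq(-x',-v)$, and the choice $2x\neq u$ gives $x\neq x'$, which settles it. Combining the three cases yields $R_{\tilde f}(\mathbb{R}^+)=\{0,2,4\}$.
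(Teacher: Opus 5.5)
Your proof is correct and is essentially the paper's argument: the irrationality of $c$ forces any two solutions to satisfy $y'^2=y^2$, and the reflection $x\mapsto -x-by$ (integral once $by\in\mathbb{Z}$) supplies the second pair, which is exactly the four-element solution set $S$ the paper writes down. Your explicit witnesses ($y=v$ with $2x\neq u$ for the value $4$, and $z=1$ for the value $2$) make concrete what the paper leaves implicit, and your direct bound $R_{\tilde f}(z)\le 4$ is the same computation the paper uses, so you need not rely on Lemma~\ref{lm:2} at all.
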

    \begin{proof} Let $z$ be a positive real number. Suppose $(x, y)$ and $(x', y')$ are integer solutions to $z=\tilde f(x, y)$. Then
    $$
    x^2-x'^2+b(xy-x'y')=c(y'^2-y^2).
    $$
    It follows that 
	\[
    y'^2 = y^2 \quad\text{and}\quad 
	x'^2 + bx'y' = x^2 + bxy.
    \]
Therefore, if $(x, y)$ is an integer solution to $z=f(x, y)$, the complete set of solutions to $z=f(x, y)$ is:
	\[ S = \{ (x, y), \quad (-x-by, y), \quad (-x, -y), \quad (x+by, -y) \}. \]
These are integer solutions provided $by\in \mathbb{Z}$. Note that $|S|=2$ when either $y=0$ or $2x+by=0$. Furthermore, 
    if $z$ is in the form of $z=A+cB$ where $B$ is not a perfect square, then $z=f(x, y)$ has no integer solution. \end{proof}

    \begin{lemma}\label{lm:b}
Suppose $b\not\in\mathbb{Q}$ and $c\in\mathbb{Q}$.  Then
\[
R_{\tilde{f}}(\mathbb{R^{+}})=\begin{cases} \{0, 2\} & \text{$c$ is not a perfect square in $\mathbb{Q}$;}\\
\{0, 2, 4\} &\text{$c$ is a perfect square in $\mathbb{Q}$.}
\end{cases}
\]
\end{lemma}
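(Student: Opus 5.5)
Mirroring the proof of Lemma~\ref{lm:a}, I will compare two integer solutions $(x,y)$ and $(x',y')$ of $z=\tilde f(x,y)$ and separate the rational and irrational parts. Subtracting gives
\[
(x^2-x'^2)+c(y^2-y'^2)=-b(xy-x'y').
\]
Since $b\notin\mathbb{Q}$ and the left side is rational, both sides must vanish. Hence
\[
xy=x'y' \quad\text{and}\quad x^2+cy^2=x'^2+cy'^2 .
\]
So every solution shares the invariants $P:=xy$ and $N:=x^2+cy^2$. Conversely, any integer pair with the same $P$ and $N$ is again a solution.

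Next I count the integer pairs with prescribed $(P,N)$. If $P=0$, say $y=0$, then $x'y'=0$ and $x'^2+cy'^2=x^2$. Either $y'=0$, giving $x'=\pm x$, or $x'=0$ and $cy'^2=x^2$. The second option forces $c=(x/y')^2$ to be a rational square. The case $x=0$ is symmetric. If $P\neq0$, eliminate $x'=P/y'$ to get
\[
c y'^4-Ny'^2+P^2=0 .
\]
This is a quadratic in $y'^2$ with at most two roots, whose product is $P^2/c$. One root is $y^2$, and the other is $u:=P^2/(cy^2)=x^2/c$. A second solution with $y'^2\neq y^2$ requires $u$ to be the square of an integer, so that $c=(x/y')^2$ is again a rational square. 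Thus the solutions are $\pm(x,y)$, together with possibly $\pm(x',y')$ where $y'^2=x^2/c$ and $x'=P/y'$. Choosing the sign of $y'$ fixes $x'$, so this second pair appears only in this $\pm$ form. Therefore $R_{\tilde f}(z)\le 4$ always, and $R_{\tilde f}(z)\le 2$ whenever $c$ is not a rational square. Together with $R_{\tilde f}(z)$ being even, with $2$ attained (e.g.\ $z=1$ from $(\pm1,0)$), and with $0$ attained by the countability remark, this settles the non-square case: $R_{\tilde f}(\mathbb{R}^+)=\{0,2\}$.

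For the square case $c=(s/t)^2$ with $\gcd(s,t)=1$, I still need to exhibit some $z$ with exactly four solutions. I would take $(x,y)=(s,1)$ and $(x',y')=(t,\ s^2/t)$... but this needs $t\mid s^2$, so instead I will pick the pair carefully. Take $y=t$ and $x=ks$ for an integer $k\ge1$. Then $x^2/c=k^2t^2$, so $y'=kt$ and $x'=P/y'=ks\cdot t/(kt)=s$. Here one must check $(x',y')\neq\pm(x,y)$, i.e.\ $k\ne1$. With $k=2$, the pairs $(2s,t)$ and $(s,2t)$ give
\[
2st\cdot b+4s^2+t^2c=4s^2+s^2=5s^2
\]
from the first pair, and from the second $s^2+2st\,b+4t^2c=5s^2$ as well. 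So $z=5s^2+2stb$ has exactly four solutions.

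The main obstacle is the careful case analysis in the middle step: I must check that the quadratic-in-$y'^2$ argument gives no further solutions beyond these two $\pm$ pairs. This includes the degenerate cases $P=0$ and coinciding roots, which only decrease the count. I will also verify that the case $c$ a rational square has $c\neq0$ and does not collide with $b^2-4c<0$, which is automatic since $c>0$.
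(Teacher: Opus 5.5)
Your proposal is essentially correct and follows the paper's route. You split $\tilde f$ into its rational part and its $b$-part to get $x'y'=xy$ and $x'^2+cy'^2=x^2+cy^2$, then reduce to a quadratic. You work in $y'^2$ with roots $y^2$ and $x^2/c$, where the paper works in $x'^2$ with roots $x^2$ and $cy^2$. Either way, the extra pair $\pm(\sqrt{c}\,y,\ x/\sqrt{c})$ appears only when $c$ is a rational square. Your separate treatment of $xy=0$ and your explicit witness $z=5s^2+2stb$, with solutions $\pm(2s,t),\pm(s,2t)$, are more careful than the paper's.

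There is one small omission. In the square case you never check that the value $2$ is attained. Your witness $z=1$ does not carry over when $c=1/t^2$: for example, $c=1$ gives the four solutions $(\pm1,0),(0,\pm1)$. Use $z=\tilde f(s,t)=2s^2+stb$ instead. There the two roots $y^2$ and $x^2/c$ both equal $t^2$, so only $\pm(s,t)$ remain. This is the paper's case $x=ry$.
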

\begin{proof} 	Let $z$ be a positive real number and let $(x, y)$ be an integer solution to $z=\tilde f(x, y)$. Then $z$ is uniquely decomposed into a rational part $M$ and an irrational part $L$:
	\[ z = (x^2 + cy^2) + b(xy)=M+bL \]
Therefore, any other integer solution $(x', y')$ to $z=f(x, y)$ must satisfy:
	\[x'y' = L \quad\text{and}\quad x'^2 + cy'^2 =M.\]
Substituting $y' = L/x'$ into the second equation above and multiplying both sides by $x'^2$, we have:
	\[ x'^4 - Mx'^2 + cL^2 = 0. \]
Applying the quadratic formula to solve for $x'^2$ and substituting $M = x^2 + cy^2$ and 
$L = xy$ into the discriminant, we obtain
\begin{align*}
 x'^2 &= \frac{M \pm \sqrt{M^2 - 4cL^2}}{2} \\
	&= \frac{(x^2 + cy^2) \pm (x^2 - cy^2)}{2}.
\end{align*}
Thus  $x'^2 = x^2$ or $x'^2 = cy^2$.

If $c$ is {\it not} the square of a rational number, then the only integer solutions are $(x, y)$ and $(-x, -y)$.  
Therefore, in this case, $R_{\tilde{f}}(\mathbb{R^{+}})=\{0, 2\}$.
	
If $c = r^2$ for some $r \in \mathbb{Q}$,  then the equation $z=f(x, y)$ has 4 distinct integer
solutions:
\[
\pm (x,\; y) \quad \text{and}\quad \pm (ry, \; x/r),
\]
provided both $ry$ and $x/r$ are integers, and $x\not=ry$.  Moreover,  the number of solutions is 2 if $x=ry$. 
Thus, in this case, $R_{\tilde{f}}(\mathbb{R}^+)=\{0, 2, 4\}$. \end{proof}

\begin{lemma}\label{lm:c}
Suppose $b, c\not\in\mathbb{Q}$ and $\{1, b, c\}$ is linearly independent over  $\mathbb{Q}$. Then $R_{\tilde{f}}(\mathbb{R^{+}})=\{0, 2\}$.
\end{lemma}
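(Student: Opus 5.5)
We follow the pattern of Lemmas~\ref{lm:a} and~\ref{lm:b}: fix a positive real $z$ with an integer solution $(x,y)$ to $z=\tilde f(x,y)$, and determine all other solutions $(x',y')$ by comparing coefficients with respect to a $\mathbb{Q}$-basis. Write
\[
z=x^2+b\,xy+c\,y^2 .
\]
Since $\{1,b,c\}$ is linearly independent over $\mathbb{Q}$, the triple of rational coordinates $(x^2,\,xy,\,y^2)$ of $z$ in the basis $\{1,b,c\}$ is uniquely determined by $z$. Hence any other integer solution $(x',y')$ must satisfy
\[
x'^2=x^2,\qquad x'y'=xy,\qquad y'^2=y^2 .
\]

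From the first and third equations, $x'=\pm x$ and $y'=\pm y$. The middle equation then forces the two signs to agree whenever $xy\neq 0$, so $(x',y')=\pm(x,y)$. If $xy=0$, say $y=0$, then $y'=0$ and $x'=\pm x$, again giving only $\pm(x,y)$; the case $x=0$ is symmetric. Note that $(x,y)\neq(0,0)$ because $z>0$. Therefore $R_{\tilde f}(z)=2$ whenever $z$ is represented at all, and $R_{\tilde f}(z)\in\{0,2\}$ in every case.

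Both values are attained. The value $2$ occurs at $z=\tilde f(1,0)=1$, where the solution set is $\{\pm(1,0)\}$. The value $0$ occurs at any positive $z$ outside the countable set $\{\tilde f(x,y)\mid x,y\in\mathbb{Z}\}$, as noted before Lemma~\ref{lm:a}. Thus $R_{\tilde f}(\mathbb{R}^+)=\{0,2\}$.

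The only real obstacle is justifying the coefficient comparison. Here the linear independence hypothesis is exactly what is needed: the difference $\tilde f(x,y)-\tilde f(x',y')=0$ is a rational combination of $1,b,c$ with coefficients $x^2-x'^2$, $xy-x'y'$, $y^2-y'^2$, so all three coefficients vanish. The remaining sign bookkeeping is routine.
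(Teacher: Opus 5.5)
Your proposal is correct and follows the same approach as the paper: you compare coefficients in the $\mathbb{Q}$-basis $\{1,b,c\}$ to get $x'^2=x^2$, $x'y'=xy$, $y'^2=y^2$, which forces $(x',y')=\pm(x,y)$. You also handle the sign bookkeeping and show explicitly that both values $0$ and $2$ are attained, which the paper leaves implicit.
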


\begin{proof} Let $z$ be a positive real number, and let $(x, y)$ be an integer solution to $z=\tilde f(x, y)$.
Suppose $\{1, b, c\}$ is linearly independent over $\mathbb{Q}$. If $(x', y')$ is also an integer solution
to $z=\tilde f(x, y)$. Then $x'^2=x^2$, $x'y'=xy$, and $y'^2=y^2$. It follows that $(x', y')=\pm (x, y)$. Thus, in this case, $R_{\tilde{f}}(\mathbb{R^{+}})=\{0, 2\}$. 
\end{proof}

 We now deal with the case when $b, c\not\in\mathbb{Q}$, and $\{1, b, c\}$ is linearly dependent over $\mathbb{Q}$.
\begin{lemma}\label{lm:d}
Suppose $b, c\not\in\mathbb{Q}$ and $c=\alpha b+\beta$ for some $\alpha, \beta\in\mathbb{Q}$. Then 
\[
R_{\tilde{f}}(\mathbb{R^{+}})=\begin{cases} \{0, 2\},  & \text{if $\alpha^2+\beta$ is not a perfect square in $\mathbb{Q}$};\\
\{0, 2, 4\},  &\text{if $\alpha^2+\beta$ is a perfect square in $\mathbb{Q}$.}
\end{cases}
\]
\end{lemma}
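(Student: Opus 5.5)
The plan is to reduce to the situation of Lemma~\ref{lm:b} by a rational change of variables that separates the rational part of $\tilde f$ from its $b$-part. Substituting $c=\alpha b+\beta$ gives
\[
\tilde f(x,y)=(x^2+\beta y^2)+b\,(xy+\alpha y^2).
\]
Since $b\notin\mathbb{Q}$ and both brackets are rational for integer $(x,y)$, two integer points $(x,y)$ and $(x',y')$ give the same value $z$ if and only if
\[
x'^2+\beta y'^2=x^2+\beta y^2=:M \quad\text{and}\quad x'y'+\alpha y'^2=xy+\alpha y^2=:L .
\]
I will set $u=x+\alpha y$, which is a rational linear change of variables. Then $L=uy$, and
\[
x^2+\beta y^2=u^2-2\alpha uy+(\alpha^2+\beta)y^2 .
\]
With $\gamma:=\alpha^2+\beta$ and $M':=M+2\alpha L$, the two conditions become
\[
u'y'=uy=L \quad\text{and}\quad u'^2+\gamma y'^2=u^2+\gamma y^2=M'.
\]
These are exactly the equations treated in the proof of Lemma~\ref{lm:b}, with $(x,c)$ replaced by $(u,\gamma)$.

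Next I repeat that argument. Eliminating $y'=L/u'$ (the case $u'=0$, i.e.\ $L=0$, is handled separately) gives
\[
u'^4-M'u'^2+\gamma L^2=0,
\]
whose roots are $u'^2=u^2$ and $u'^2=\gamma y^2$. The root $u'^2=u^2$ yields only $(u',y')=\pm(u,y)$, i.e.\ $\pm(x,y)$. If $\gamma$ is not the square of a rational number, the root $u'^2=\gamma y^2$ forces $y=0$ and $u'=0$, so $L=0$ and $y'^2=M'/\gamma$ with $M'=x^2$ a nonzero rational square; then $y'^2=x^2/\gamma$ is not a rational square, so there is no integer solution. Hence every fibre has size $0$ or $2$, and size $2$ is attained at $(1,0)$. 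This gives $\{0,2\}$; the value $0$ comes from countability, as noted before Lemma~\ref{lm:a}.

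If $\gamma=s^2$ with $s\in\mathbb{Q}$, the candidate extra solutions are $(u',y')=\pm(sy,\,u/s)$, i.e.\ $x'=sy-\alpha u/s$ and $y'=u/s$ (when $s=0$ I will handle it directly, since then $b$-part and rational part degenerate). Lemma~\ref{lm:2} already caps every fibre at $4$. So it remains to show that the values $2$ and $4$ both occur.
\begin{itemize}
\item[(a)] For $2$: take $(x,y)=(1,0)$, for which the second pair is $\pm(0,1/s)$. This is either non-integral or coincides with $\pm(1,0)$ only in degenerate cases; alternatively I will pick a point with $u=\pm sy$, so that the two pairs coincide.
\item[(b)] For $4$: choose $y=N$ divisible by the denominators of $\alpha$ and $s$, and choose $x$ so that $u=x+\alpha N$ is an integer multiple of the numerator data of $s$ (e.g.\ $u=sNk$ adjusted by clearing denominators). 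This makes $u/s$ and $sy-\alpha u/s$ integers, while keeping $u\neq\pm sy$ so that the four points are distinct.
\end{itemize}

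The main obstacle I expect is this last integrality bookkeeping, together with the degenerate cases $\gamma\le 0$ (especially $\gamma=0$) and the distinctness condition $u\neq\pm sy$. For these I will use positive-definiteness, i.e.\ $b^2-4c<0$ with $c=\alpha b+\beta$, to control the sign. I will first try choosing $y$ and $u$ both as large multiples of a common denominator $D$ with $u=2sy$, which makes distinctness and integrality automatic.
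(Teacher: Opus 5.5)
Your proposal is correct and is essentially the paper's argument. The shear $u=x+\alpha y$ simply repackages the paper's key identity $(x_2+\alpha y_2)^2=(\alpha^2+\beta)y_1^2$. The bound $\alpha^2+\beta>(\alpha-b/2)^2>0$ from positive-definiteness disposes of $\gamma\le 0$. Your witnesses $u=\pm sy$ (fibre of size $2$) and $u=2sy$ (fibre of size $4$) play the roles of the paper's choices $x_1=(\pm\sqrt{\alpha^2+\beta}-\alpha)y_1$ and $y_1=0$.

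Drop the $(1,0)$ option in (a), though. Its partner is $\pm(-\alpha/s,1/s)$, not $\pm(0,1/s)$, and it can be integral and distinct from $\pm(1,0)$: for $b=-\sqrt2$, $c=\sqrt2$, $\alpha=-1$, $\beta=0$ one gets $\tilde f(1,1)=\tilde f(1,0)=1$, a fibre of size $4$. So rely on $u=\pm sy$ there.
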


\begin{proof} Note that $\alpha\not=0$. Since $b^2-4c<0$, we have 
\[
\alpha^2+\beta>\alpha^2-\alpha b+\frac{b^2}4=(\alpha-\frac{b}2)^2>0.
\]
\
Write 
\begin{equation}\label{m}
\tilde  f(x,y) = (x^2 + \beta y^2)+(xy + \alpha y^2)b.
\end{equation}
Suppose there are integer pairs $(x_1,y_1)$ and $(x_2,y_2)$ such that
\begin{equation}\label{mm}
\tilde  f(x_1,y_1) = \tilde f(x_2,y_2).
\end{equation}
Then 
\begin{align}
  x_1^2 + \beta y_1^2 &= x_2^2 + \beta y_2^2, \label{rp}\\
  x_1 y_1 + \alpha y_1^2 &= x_2 y_2 + \alpha y_2^2. \label{irp}
\end{align}
Let $d=y_2^2-y_1^2$. From \eqref{rp} and \eqref{irp}, we have
\begin{align}
  x_1^2 - x_2^2 &= \beta d \label{rp2}\\
  x_1 y_1 - x_2 y_2 &= \alpha d \label{rp3}
\end{align}
Multiplying both sides of \eqref{rp} by $y_1^2$ and substituting $x_1 y_1 = x_2 y_2 + \alpha d$, after simplification, we obtain
\begin{equation}\label{qq}
  d\,\big( x^2_2 + 2\alpha y_2x_2 + \alpha^2 d - \beta y^2_1 \big) = 0. 
\end{equation}
If $d=0$, then from \eqref{rp} and \eqref{irp}, we have $(x_1,y_1) = \pm (x_2, y_2)$.

We now assume that $d\not=0$. Then 
$$
x^2_2 + 2\alpha y_2x_2 + \alpha^2 d - \beta y^2_1 = 0. 
$$
Solving this for $x_2$ using the quadratic formula yields
\begin{equation}
  x_2 = -\alpha y_2 \pm|y_1|\sqrt{\alpha^2 + \beta}. \label{qf}
\end{equation}

Suppose $\alpha^2 + \beta$ is not the square of a rational number. Then $y_1\not=0$. Otherwise, if $y_1=0$, then $x_2 = -\alpha y_2$. Substituting this into \eqref{rp} gives
\begin{equation}
  x^2_1 = (\alpha^2+\beta) y^2_2.
\end{equation}
Because $x_1, y_2 \in \mathbb{Z}$, we get $x_1 = y_2 = 0$, which contradicts the assumption $d = y_2^2- y_1^2 \neq 0$.\par

Since $y_1 \neq 0$, we have $x_2\not\in\mathbb{Q}$. Therefore, in this case, integer solutions to equation \eqref{mm} must satisfy $(x_1, y_1)=\pm (x_2, y_2).$ 

Suppose $\alpha^2 + \beta$ is a rational square. If $y_1=0$, then $x_2=-\alpha y_2$. From~\eqref{rp}, 
we then have
\[
(x_2, \; y_2)=\pm(-\frac{\alpha x_1}{\sqrt{\alpha^2+\beta}}, \;  \frac{x_1}{\sqrt{\alpha^2+\beta}}).
\]
If we choose an integer $x_1$ so that both $x_2$ and $y_2$ are integers, then the equation $z=f(x_1, 0)$ has four integer solutions 
\[
\pm (x_1,\; 0) \quad \text{and} \quad \pm (-\frac{\alpha x_1}{\sqrt{\alpha^2+\beta}}, \; \frac{x_1}{\sqrt{\alpha^2+\beta}}).
\]

We now show that it is possible to choose $(x_1, y_1)\in\mathbb{Z}^2$ so that the only integer solutions to equation \eqref{mm} are $(x_2, y_2)=\pm (x_1, y_1)$. We choose $y_1>0$.  Using \eqref{qf} and \eqref{irp} to solve for $y_2$ and then $x_2$ in terms of $x_1$ and $y_1$, we have 
\begin{equation}\label{xy2}
(x_2, \; y_2)=\pm (-\frac{\alpha(x_1+\alpha y_1)}{\sqrt{\alpha^2+\beta}}+y_1, \, \frac{x_1+\alpha y_1}{\sqrt{\alpha^2+\beta}}).
\end{equation}
If we choose $(x_1, y_1)\in\mathbb{Z}^2$ such that 
\[
(\sqrt{\alpha^2+\beta}-\alpha) y_1=x_1\quad \text{or}\quad (\sqrt{\alpha^2+\beta}+\alpha)y_1=-x_1,
\]
then $y_2=y_1$ or $y_2=-y_1$. In this case, the only integer solutions to equation \eqref{mm} are $(x_2, y_2)=\pm (x_1, y_1)$.  We thus conclude the proof of the lemma.
\end{proof}

The second part of Theorem~\ref{N}, when either $r\cos\theta$ or $r^2$ is irrational, is then a direct consequence of Lemma~\ref{lm:2}, Lemma~\ref{lm:a},
Lemma~\ref{lm:b}, Lemma~\ref{lm:c}, and Lemma~\ref{lm:d}. This concludes the proof of Theorem~\ref{N}.

\begin{remark} It would be of interest to generalize Theorem~\ref{Q} and Theorem~\ref{N} to higher dimensions. Both theorems suggest that a domain whose set 
	of multiplicities is infinite is the exception rather than the norm. The collection of rectangles or tori with such a property has zero measure among their peers.  It would be interesting to quantify and understand this phenomenon in general. 
\end{remark}

%

\bigskip

\noindent{\bf Acknowledgments.}  Part of the work was done while the first author visited
the Institut des Hautes \'{E}tudes Scientifiques (IH\'{E}S) and Westlake University. He thanks these institutions for providing exceptional environments for research. The authors also thank Jack Heimrath for stimulating discussions on the subject.

\bibliographystyle{plain}
\bibliography{multiplicities.bib}

@unpublished{BerardHelffer,
	author = "B\'{e}rard, Pierre and Helffer, Bernard",
	title = "Upper bounds on eigenvalue multipicities for surfaces of genus $0$ revisited",
	
	note = "arXiv:2202.06587v4, December, 2024."
}

@article{Cheng,
	author ="Cheng, S.-Y." ,
	title = "Eigenfunctions and nodal sets",
	journal = "Math. Helv.",
	year = 1976,
	volume=51,
	pages="43--55",
}

@BOOK{Cox,
	TITLE = "Primes of the Form $x^2+ny^2$",
	SUBTITLE = {Fermat, Class Field Theory, and Complex Multiplication},
	AUTHOR = {David A. Cox},
	YEAR = {1989}, 
	PUBLISHER = {Wiley},
	ISBN = {978-1-4704-7028-9},
}

@Book{Gauss,
	AUTHOR      = {Gauss, Carl F.},
	TRANSLATOR  = {Clarke, Arthur A.},
	title       = {\it Disquisitiones Arithmetic\ae},
	language    = {langlatin},
	publisher   = {Springer-Verlag},
	location    = {New York},
	year        = {1986},
}

@ARTICLE{Fu2025-ph,
	title     = "Multiplicities of eigenvalues for the Laplace operator on a
	square",
	author    = "Fu, Siqi and Heimrath, Jack and Hsiao, Samuel",
	journal   = "Involve",
	publisher = "Mathematical Sciences Publishers",
	volume    =  18,
	number    =  3,
	pages     = "487--494",
	month     =  apr,
	year      =  2025,
	language  = "en"
}

@unpublished{Heimrath,
	author={Heimrath, Jack},
	title={Counting Multiplicities of Dirichlet Eigenvalues},
	school={Rutgers University--Camden},
	year={2023},
	note={Master's Thesis, Rutgers University--Camden}
}

@article{Kac1966,
	author = "Kac, Mark",
	title = "Can One Hear the Shape of a Drum?",
	journal = "The American Mathematical Monthly",
	year = 1966,
	pages="1--23",
	volume=73}

@book{McCartin,
	author = "McCartin, B.",
	title = "Laplacian Eigenstructure of the Equilateral Triangle",
	publisher = "Hikari Ltd",
	year = 2011
}

@article{Nad,
	author = "Nadirashvili, N.",
	title = "Multiple eigenvalues of the Laplace operator",
	journal = "Math USSR-Sb.",
	year = 1988,
	pages="225--238"
}

@article{Pinsky,
	author = "Pinsky, M.",
	title = "The eigenvalues of an equilateral triangle",
	journal = "SIAM J. Math. Anal",
	year = 1980,
	volume=11,
	nomber=5,
	pages="819--827"
}

@article{Weber1882,
	author = "Weber, H.",
	journal = {Mathematische Annalen},
	language = {ger},
	pages = {301-329},
	title = {Beweis des Satzes, dass jede eigentlich primitive quadratische Form unendlich viele Primzahlen darzustellen fähig ist},
	url = {http://eudml.org/doc/157037},
	volume = {20},
	year = {1882},
}

@BOOK{Niven-et-al,
	title     = "Introduction to the theory of numbers",
	author    = "Niven, Ivan and Zuckerman, Herbert and Montgomery, Hugh",
	publisher = "John Wiley \& Sons, Inc.",
	year      =  1991,
	language  = "en"
}

@BOOK{SW,
	title     = "Fourier Analysis on Euclidean Spaces",
	author    = "Stein, Elias M. and Weiss, Guido",
	publisher = "Princeton University Press",
	year      =  1971,
	language  = "en"
}

@article{BG06,
	author = {Valentin Blomer and Andrew Granville},
	title = {{Estimates for representation numbers of quadratic forms}},
	volume = {135},
	journal = {Duke Mathematical Journal},
	number = {2},
	publisher = {Duke University Press},
	pages = {261 -- 302},
	year = {2006},
	doi = {10.1215/S0012-7094-06-13522-6},
	URL = {https://doi.org/10.1215/S0012-7094-06-13522-6}
}

@article{Bri54, title={An Elementary Proof of a Theorem About the Representation of Primes by Quadratic Forms}, volume={6}, DOI={10.4153/CJM-1954-034-0}, journal={Canadian Journal of Mathematics}, author={Briggs, W. E.}, year={1954}, pages={353–363}}

@article{SW92,
	ISSN = {00029890, 19300972},
	URL = {http://www.jstor.org/stable/2325086},
	author = {Blair K. Spearman and Kenneth S. Williams},
	journal = {The American Mathematical Monthly},
	number = {5},
	pages = {423--426},
	publisher = {[Taylor & Francis, Ltd., Mathematical Association of America]},
	title = {Representing Primes by Binary Quadratic Forms},
	urldate = {2026-01-14},
	volume = {99},
	year = {1992}
}

@BOOK{Davies95,
	title     = "Spectral theory and differential operators",
	author    = "Davies, E.-B.",
	publisher = "Cambridge University Press",
	year      =  1995,
	language  = "en"
}

@article {Milnor64,
	AUTHOR = {Milnor, J.},
	TITLE = {Eigenvalues of the {L}aplace operator on certain manifolds},
	JOURNAL = {Proc. Nat. Acad. Sci. U.S.A.},
	FJOURNAL = {Proceedings of the National Academy of Sciences of the United
	States of America},
	VOLUME = {51},
	YEAR = {1964},
	PAGES = {542},      
}

\end{document}